\newcommand{\rd}{\mathrm{d}}
\newcommand{\rn}{\mathbb{R}^{N}}
\renewcommand{\O}{\Omega}
\newcommand{\PO}{\partial\Omega}
\newcommand{\lqb}{L^q(\partial\Omega)}
\renewcommand{\wp}{W^{1,p}(\Omega )}
\newcommand{\wpc}{W^{1,p}_0(\Omega )}
\newcommand{\wpa}{W^{1,p}_A(\Omega )}
\newtheorem{de}{Definition}[section]
\newtheorem{lem}[de]{Lemma}
\newtheorem{te}[de]{Theorem}
\newtheorem{co}[de]{Corollary}
\newtheorem{pr}[de]{Proposition}
\newtheorem{ob}[de]{Remark}
\def\cprime{$'$}
\providecommand{\MR}{\relax\ifhmode\unskip\space\fi MR }
\providecommand{\href}[2]{#2}
\begin{document}

\title[extremals of the trace inequality]
{Optimization problem for extremals of the trace inequality in domains with holes}
\author[Leandro M. Del Pezzo]
{Leandro M. Del Pezzo}

\address{Leandro M. Del Pezzo \hfill\break\indent
Departamento  de Matem\'atica, FCEyN, Universidad de Buenos Aires,
\hfill\break\indent Pabell\'on I, Ciudad Universitaria (1428),
Buenos Aires, Argentina.}

\email{{\tt ldpezzo@dm.uba.ar}}

\begin{abstract}
We study the Sobolev trace constant for functions defined in a bounded domain $\O$  that vanish in the subset $A.$  We find a formula for the first variation of the Sobolev trace with respect to hole. As a consequence of this formula, we prove that when $\O$ is a centered ball, the symmetric hole is critical when we consider deformation that preserve volume but is not optimal for some case. 
\end{abstract}

\maketitle

\section{Introduction and Main Results.}

Let $\O$ be a bounded smooth domain in $\rn$ with $N\geq2$ and
$1<p<\infty$. We denote by $p_*$ the critical exponent for the
Sobolev trace immersion given by $p_*=p(N-1)/(N-p)$ if $p<N$ and
$p_*=\infty$ if $p\ge N$.

For any $A\subset\overline{\O},$ which is a smooth open subset, we define the space
$$
\wpa=\overline{C_0^\infty(\overline{\O}\setminus A)},
$$
where the closure is taken in $W^{1,p}-$norm.
By the Sobolev Trace Theorem, there is a compact embedding
\begin{equation}\label{sobolev}
\wpa\hookrightarrow\lqb,
\end{equation}
for all $1<q<p^*$. Thus, given $1<q<p^*,$ there exist a constant $C=C(q,p)$ such that
$$
C\Bigg\{\int_{\PO} |u|^q \, \rd S\Bigg\}^{\frac{p}{q}} \leq
\int_\O |\nabla u|^p + |u|^p \, \rd x.
$$
The best (largest) constant in the above inequality is given by
\begin{equation}\label{sqa}
S_q(A):= \inf_{u\in\wpa\setminus\wpc}\dfrac{\int_\O |\nabla u|^p +
|u|^p \, \rd x}{\big\{\int_{\PO} |u|^q \, \rd
S\big\}^\frac{p}{q}}.
\end{equation}
By \eqref{sobolev}, there exist an extremal for $S_q(A).$
Moreover, an extremal for $S_q(A)$ is a weak solution to
\begin{equation}\label{roman}
\begin{cases}
-\Delta_p u + |u|^{p-2}u = 0 & \text{in } \O\setminus\overline{A}, \\
|\nabla u|^{p-2}\frac{\partial u}{\partial \nu} = \lambda |u|^{q-2}u & \text{on }\partial\O,\\
u=0 & \textrm{on }\partial A,
\end{cases}
\end{equation}
where $\Delta_p u = \text{div}(|\nabla u|^{p-2}\nabla u)$ is the
usual $p-$laplacian, $\frac{\partial}{\partial\nu}$ is the outer
unit normal derivative and $\lambda$ depends on the normalization
of $u$. When $\|u\|_{\lqb}=1$ we have that $\lambda=S_q(A).$
Moreover, when $p=q$ problem \eqref{roman} becomes homogeneous and
therefore is a nonlinear eigenvalue problem. In this case, the
first eigenvalue of \eqref{roman} coincides with the best Sobolev
trace constant $S_q(A)=\lambda_1(A)$ and it is shown in \cite{MR}
that it is simple (see also \cite{FBR2}). Therefore, if $p=q$, the
extremal for $S_p(A)$ is unique up to constant factor. In the
linear setting, i.e. when $p=q=2$, this eigenvalue problem is
known as the Steklov eigenvalue problem, see \cite{S}.

It is the purpose of this article to analyze the dependance of the
Sobolev trace constant $S_q(A)$ with respect to variations on the
set $A$. To this end, we compute the so-called {\em shape
derivative} of $S_q(A)$ with respect to regular perturbations of
the {\em hole} $A$.

Let $V:\rn\to\rn$ be a regular (smooth) vector filed, globally
Lipschitz, with support in $\O$ and let $\psi_t:\rn\to\rn$ be
defined as the unique solution to
\begin{equation}\label{palermo}
\begin{cases}
\frac{d}{dt}\psi_t(x)=V(\psi_t(x)) & t>0\\
\psi_0(x)=x & x\in\rn.
\end{cases}
\end{equation}
We have
$$
\psi_t(x) = x +t V(x) + o(t) \quad \forall x\in\rn.
$$
Now, we define $A_t:=\psi_t(A)\subset\O$ for all $t>0$ and
\begin{equation}\label{sqt}
S_q(t) = \inf_{u\in W^{1,p}_{A_t}(\Omega
)\setminus\wpc}\dfrac{\int_\O |\nabla u|^p + |u|^p \, \rd
x}{\big\{\int_{\PO} |u|^q \, \rd S\big\}^{\frac{p}{q}}}.
\end{equation}
Observe that $A_0 = A$ and therefore $S_q(0)=S_q(A)$.

In \cite{FBGR} Fern\'andez Bonder, Groisman and Rossi analyze this
problem in the linear case $p=q=2$ and prove that $S_2(t)$ is
differentiable with respect to $t$ at $t=0$ and it holds
$$
\frac{d}{dt}S_2(t)\Big|_{t=0} = -\int_{\partial A}
\left(\dfrac{\partial u}{\partial\nu}\right)^2\langle V, \nu
\rangle \, \rd S,
$$
where $u$ is a normalized eigenfunction for $S_2(A)$ and $\nu$ is
the exterior normal vector to $\O\setminus\overline{A}$.

Furthermore, in the case that $\O$ is the ball $B_R$ with center
$0$ and radius $R>0$ the authors show that a centered ball
$A=B_r$, $r<R$, is critical in the sense that $S_2'(A)=0$ when
considering deformations that preserves volume and that this
configuration is not optimal.

We say that hole $A^*$ is optimal for the parameter $\alpha$,
$0<\alpha<|\Omega|$, if $|A^*|=\alpha$ and
$$
S_q(A^*) = \inf_{A\subset\overline{\Omega} \atop |A|=\alpha}
S_q(A).
$$

Therefore there is a lack of symmetry in the optimal
configuration.

Here we extend these results to the more general case $1<p<\infty$
and $1<q<p^*$. Our method differs from the one in \cite{FBGR} in
order to deal with the nonlinear character of the problem.

Our first result states

\begin{te}\label{ibarra}
Suppose $A\subset\overline{\O}$ is a smooth open subset and let
$1<q<p^*$. Then, with the previous notation, we have that $S_q(t)$
is differentiable at $t=0$ and there exists $u$ a normalized
extremal for $S_q(A)$ such that
$$
S_q'(0) = -\int_{\partial A} \Big|\dfrac{\partial
u}{\partial\nu}\Big|^p\langle V, \nu \rangle \, \rd S,
$$
where $S_q'(0) = \frac{d}{dt}S_q(t)\Big|_{t=0}$ and $\nu$ is the
exterior normal vector to $\O\setminus\overline{A}.$
\end{te}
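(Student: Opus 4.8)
The strategy is to pull back the problem defined on the moving domain $\O\setminus\overline{A_t}$ to the fixed domain $\O\setminus\overline{A}$ via the diffeomorphism $\psi_t$, differentiate the resulting variational quotient in $t$, and identify the boundary term. Concretely, given $w\in W^{1,p}_{A_t}(\O)$, write $w=v\circ\psi_t^{-1}$ with $v\in\wpa$; then changing variables in all three integrals ($\int_\O|\nabla w|^p$, $\int_\O|w|^p$, $\int_{\PO}|w|^q\,\rd S$, noting $\psi_t=\mathrm{id}$ near $\PO$ since $V$ has support in $\O$, so the boundary integral is unchanged) produces
\begin{equation*}
S_q(t)=\inf_{v\in\wpa\setminus\wpc}\frac{\int_\O A_t(x)\nabla v\cdot\nabla v\,|\dots|^{(p-2)/2}\,J_t\,\rd x+\int_\O|v|^p J_t\,\rd x}{\big\{\int_{\PO}|v|^q\,\rd S\big\}^{p/q}},
\end{equation*}
where $J_t=\det D\psi_t$ and the matrix field is $A_t=(D\psi_t)^{-1}(D\psi_t)^{-T}J_t$, so that the numerator is a smooth (in $t$) family of coercive functionals on the \emph{fixed} space $\wpa$. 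This removes the principal difficulty that the admissible set $W^{1,p}_{A_t}(\O)$ varies with $t$.

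First I would establish continuity and then differentiability of $t\mapsto S_q(t)$. Using a fixed competitor one gets $\limsup_{t\to0}S_q(t)\le S_q(0)$; for the reverse inequality take extremals $v_t$ for the pulled-back quotient, normalized in $\lqb$, observe they are bounded in $\wpa$ (by coercivity, uniform in $t$ for small $t$), extract a weakly convergent subsequence with strong $\lqb$ limit by the compact trace embedding \eqref{sobolev}, and pass to the limit using weak lower semicontinuity to get a minimizer $v_0$ of the limit functional, hence $\liminf S_q(t)\ge S_q(0)$ and convergence of the extremals. For differentiability, the cleanest route is to treat $S_q(t)$ as a min over $v$ of an explicit function $\mathcal{R}(t,v)$; since $t\mapsto\mathcal{R}(t,v_0)$ is differentiable with
\begin{equation*}
\frac{d}{dt}\Big|_{t=0}\mathcal{R}(t,v_0)=-\int_{\O}\Big(|\nabla v_0|^p+|v_0|^p\Big)\,\mathrm{div}\,V\,\rd x+\int_\O|\nabla v_0|^{p-2}\big(\dots\text{terms in }DV\big)\,\rd x
\end{equation*}
(using $\frac{d}{dt}J_t|_{t=0}=\mathrm{div}\,V$, $\frac{d}{dt}D\psi_t|_{t=0}=DV$), one gets the upper bound $S_q'(0^+)\le\frac{d}{dt}|_{t=0}\mathcal{R}(t,v_0)$; a matching lower bound follows by evaluating at $v_t$ and using the established convergence $v_t\to v_0$ together with uniform bounds to control the error terms. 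This is the standard shape-derivative argument; the one genuinely delicate point is justifying that the derivative of $\mathcal{R}(t,v_t)$ equals that of $\mathcal{R}(t,v_0)$, i.e.\ an envelope-type lemma, for which the key input is the strong convergence $v_t\to v_0$ in $\wpa$ (not merely weak) — and that, in turn, can be obtained from the convergence of norms plus uniform convexity of $L^p$.

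The final and most technical step is to rewrite the volume expression $\frac{d}{dt}|_{t=0}\mathcal{R}(t,v_0)$ as the claimed boundary integral over $\partial A$. This is a Pohozaev/Rellich-type identity: using that $v_0$ weakly solves \eqref{roman} in $\O\setminus\overline{A}$ with $v_0=0$ on $\partial A$ and $|\nabla v_0|^{p-2}\partial_\nu v_0=S_q(A)|v_0|^{q-2}v_0$ on $\PO$, one multiplies the equation by $V\cdot\nabla v_0$ and integrates by parts over $\O\setminus\overline{A}$. Since $V\equiv0$ near $\PO$, the $\PO$ boundary contributions vanish, and on $\partial A$ the condition $v_0=0$ forces $\nabla v_0=(\partial_\nu v_0)\nu$, so the tangential-gradient terms drop and only $-\frac1p\int_{\partial A}|\partial_\nu v_0|^p\langle V,\nu\rangle\,\rd S$-type terms survive; after combining with the $|v_0|^p$ contribution and simplifying, all interior terms cancel against $\frac{d}{dt}|_{t=0}\mathcal{R}(t,v_0)$, leaving exactly $-\int_{\partial A}|\partial_\nu v_0/\partial\nu\,|^p\langle V,\nu\rangle\,\rd S$ after accounting for the normalization (the $1/p$ and the factor from differentiating the denominator combine with $\lambda=S_q(A)$). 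The main obstacle is carrying out this integration by parts rigorously at the regularity available for $p$-harmonic functions: one needs $v_0\in C^{1,\alpha}$ up to $\partial A$ (true since $\partial A$ is smooth and $v_0$ solves a homogeneous equation there, with possibly degenerate ellipticity handled by standard regularity theory) so that $\partial_\nu v_0$ is well-defined pointwise and the Rellich identity is legitimate — this regularity input, rather than the algebra, is where care is required.
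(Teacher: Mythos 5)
Your proposal is correct and follows essentially the same route as the paper: pull the problem back to the fixed space $\wpa$ via $\psi_t$ using the expansions of $[\psi_t']^{-1}$ and $J(\psi_t)$, compute $S_q'(0)$ by an envelope-type argument resting on strong $\wp$-convergence of the normalized extremals obtained from the compact trace embedding (the paper implements this step with elementary monotonicity lemmas for the functions $\varphi$ and $\phi$ rather than a quoted envelope theorem, but the content is the same), and then convert the resulting volume integral into the boundary integral over $\partial A$ by a Rellich-type integration by parts against $\langle\nabla u, V\rangle$ using the weak formulation of \eqref{roman}. The one place where the paper is more demanding than you suggest is the final step: it takes the view that the identity requires $u\in C^2$, which fails for $p$-harmonic-type functions, and justifies the computation by passing through regularized problems with $C^{2,\rho}$ solutions and letting $\varepsilon\to0$, rather than working directly at the $C^{1,\alpha}$ regularity level.
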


\begin{ob}\label{maradona}
If $u$ is an extremal for $S_q(A)$ we have that $|u|$ is also an
extremal associated to $S_q(A)$. Then in the previous theorem we
can suppose that $u\ge0$ in $\O$. Moreover, by \cite{L}, we have
that $u\in C^{1,\alpha}(\overline{\O})$ and if $\O$ satisfies the
interior ball condition for all $x\in\partial\O$ then $u>0$ on
$\partial \O$, see \cite{V}.
\end{ob}

In the case that $\O=B_R$, we have the next result

\begin{te}\label{datolo}
Let $\O=B_R$ and let the hole be a centered ball $A=B_r$. Then, if
$1<q\le p$, this configuration is critical in the sense that
$S_q'(B_r)=0$ for all deformations $V$ that preserve the volume of
$B_r$.
\end{te}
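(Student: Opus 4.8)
The plan is to reduce everything to Theorem \ref{ibarra} by proving that, when $\O=B_R$ and $A=B_r$, every nonnegative normalized extremal $u$ for $S_q(B_r)$ is \emph{radial}. Granting this, the conclusion is immediate: if $u=u(|x|)$, then $\partial u/\partial\nu$ is a constant $c$ on $\partial B_r$, so by Theorem \ref{ibarra} and the divergence theorem
\[
S_q'(0)=-|c|^{p}\int_{\partial B_r}\langle V,\nu\rangle\,\rd S=\pm\,|c|^{p}\int_{B_r}\operatorname{div}V\,\rd x=0,
\]
because a volume-preserving $V$ satisfies $\int_{B_r}\operatorname{div}V=\tfrac{\rd}{\rd t}|\psi_t(B_r)|\big|_{t=0}=0$ (the sign is immaterial, $\nu=-x/|x|$ being the exterior normal to $\O\setminus\overline{B_r}$ along $\partial B_r$, and the integral vanishes anyway).

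So the real work is the radial symmetry of the extremal. By Remark \ref{maradona} one may take $u\ge 0$, $u\in C^{1,\alpha}(\overline{\O})$, and the strong maximum principle applied to \eqref{roman} on the annulus $\O\setminus\overline{B_r}$ gives $u>0$ there, hence $u>0$ on each sphere $\partial B_s$ with $s\in(r,R]$. To such a $u$ I would associate the radial competitor $w(x)=\varphi(|x|)$, where $\varphi(s)^{p}=\tfrac{1}{|\partial B_s|}\int_{\partial B_s}u^{p}\,\rd S$. Working in polar coordinates one checks: (i) $w\in\wpa\setminus\wpc$, with $w=0$ on $\partial B_r$ — here one needs that $\varphi$ is absolutely continuous on $[r,R]$, with $\varphi(r)=0$ and $\varphi(R)>0$; (ii) $\int_\O|w|^{p}=\int_\O|u|^{p}$ exactly, while $\int_\O|\nabla w|^{p}\le\int_\O|\nabla u|^{p}$, the latter from the pointwise bound $|\varphi'(s)|\le\big(\tfrac{1}{|\partial B_s|}\int_{\partial B_s}|\partial_r u|^{p}\,\rd S\big)^{1/p}$, obtained by differentiating $\varphi^{p}$ under the integral and applying H\"older with exponents $p/(p-1)$ and $p$; (iii) $\int_{\PO}|w|^{q}\,\rd S\ge\int_{\PO}|u|^{q}\,\rd S$ — this is the only place the hypothesis $q\le p$ enters: $w\equiv\varphi(R)$ on $\PO=\partial B_R$ and, by Jensen for the convex map $t\mapsto t^{p/q}$, $\tfrac{1}{|\partial B_R|}\int_{\partial B_R}u^{q}\le\big(\tfrac{1}{|\partial B_R|}\int_{\partial B_R}u^{p}\big)^{q/p}=\varphi(R)^{q}$. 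Combining (i)--(iii), the Rayleigh quotient of $w$ is $\le S_q(B_r)$; hence $w$ is itself an extremal and all of the inequalities in (ii) and (iii) are equalities.

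The crux — and the step I expect to be the main obstacle — is to deduce radial symmetry of $u$ itself from these equality cases. Equality in (ii) forces $|\varphi'(s)|^{p}=\tfrac{1}{|\partial B_s|}\int_{\partial B_s}|\nabla u|^{p}\,\rd S$ for a.e.\ $s$; reading the chain of estimates backwards (equality in H\"older, then equality in the pointwise Cauchy--Schwarz bound $|\partial_r u|\le|\nabla u|$) yields that $\nabla u$ has vanishing tangential component a.e.\ on the annulus, so that the continuous function $u$ depends on $|x|$ alone. The technical care required here is the reason this is the hard part: one must justify differentiating $\varphi^{p}$ and the use of coarea/Fubini in polar coordinates, control $\varphi$ near $\partial B_r$ (where $\varphi(r)=0$, and where the strict positivity $u>0$ on $\partial B_s$ for $s>r$ legitimizes dividing in the H\"older step), and verify that equality in the integrated inequality really propagates to the pointwise identity $\partial_r u=\pm|\nabla u|$. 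Once $u$ is known to be radial, the computation of the first paragraph completes the proof; in the special case $p=q$ one may instead invoke the simplicity of the first eigenvalue of \eqref{roman} (\cite{MR,FBR2}), which together with the rotational invariance of the problem gives radial symmetry at once.
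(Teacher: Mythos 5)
Your proposal is correct, and its final step ($u$ radial $\Rightarrow$ $\partial u/\partial\nu\equiv c$ on $\partial B_r$ $\Rightarrow$ $S_q'(0)=-|c|^p\int_{\partial B_r}\langle V,\nu\rangle\,\rd S=\pm|c|^p\int_{B_r}\operatorname{div}V\,\rd x=0$ for volume-preserving $V$) is exactly the paper's. Where you genuinely diverge is in how radial symmetry of the extremal is obtained. The paper proves it through uniqueness: Proposition \ref{unicidad} shows, via Picone's identity, that for $1<q<p$ the nonnegative solution of \eqref{roman} is unique, and Remark \ref{radial} concludes radiality from the rotational invariance of the problem (the case $q=p$ being covered by the simplicity result of \cite{MR}). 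You instead symmetrize, comparing $u$ with the radial competitor $w(x)=\varphi(|x|)$ built from the spherical $L^p$-means of $u$, using H\"older for the gradient term and Jensen (the only place $q\le p$ enters) for the boundary term, and then extracting radiality from the equality case. Both routes are sound. The paper's buys the uniqueness statement itself, which it reuses later in Section 3; yours gives a single argument uniform in $1<q\le p$ that needs neither Picone's identity nor simplicity, at the cost of the equality-case analysis you flag. On that point you are slightly overcautious: the only equality you actually need is $\int_\O|\partial_r u|^p\,\rd x=\int_\O|\nabla u|^p\,\rd x$ (the H\"older equality case is irrelevant to the conclusion), and since the integrand $|\nabla u|^p-|\partial_r u|^p$ is nonnegative this forces the tangential gradient to vanish a.e., hence everywhere by the $C^{1,\alpha}$ regularity from \cite{L}, so $u$ is constant on each sphere. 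One last remark: Theorem \ref{ibarra} provides the derivative formula only for \emph{some} normalized extremal, so it matters that your symmetrization applies to \emph{every} nonnegative normalized extremal (after replacing $u$ by $|u|$ as in Remark \ref{maradona}, which does not change $|\partial u/\partial\nu|$ on $\partial B_r$); your argument does deliver this, just as the paper's uniqueness does.
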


But, if $q$ is sufficiently large, the symmetric hole with a
radial extremal is not an optimal configuration. In fact, we prove

\begin{te}\label{morel}
Let $r>0$ and $1<p<\infty$ be fixed. Let $R>r$ and
\begin{equation}\label{Q}
Q(R) = \dfrac{1}{S_p(B_r)^{\frac{p}{p-1}}}
\left(1-\frac{N-1}{R}S_p(B_r)\right)+1.
\end{equation}
If $q>Q(R)$ then the centered hole $B_r$ is not optimal.
\end{te}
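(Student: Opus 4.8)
The key claim is that for $q$ large, the centered ball $B_r$ is not optimal as a hole in $B_R$. The strategy, following the philosophy of \cite{FBGR} but now for general $p$, is to compare the radial configuration against a perturbed one. Since Theorem \ref{datolo} tells us that the symmetric hole is *critical* when $1 < q \le p$ (so the first-order shape derivative vanishes), a first-order argument won't suffice there; instead the natural route is to show that for large $q$ the radial extremal is actually unstable, i.e., there is *some* competitor — not necessarily a volume-preserving deformation of $B_r$, but any hole $A$ with $|A| = |B_r|$ — that does strictly better. The cleanest way to detect this is to exhibit a test function in $\wpa$ for a suitable non-centered $A$ (or equivalently estimate $S_q(A)$ from above) and compare with an explicit lower bound on $S_q(B_r)$.

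The plan is as follows. First, reduce to the case $p = q$ where things are most explicit: the quantity $Q(R)$ is built from $S_p(B_r)$, so I expect the argument runs by first analyzing $S_p(B_r)$ exactly (radial ODE for the $p$-Laplacian, giving the boundary data $\partial u/\partial\nu$ on both $\partial B_r$ and $\partial B_R$ in terms of $S_p(B_r)$), and then using a monotonicity/continuity argument in $q$. Concretely, I would (i) compute, for the radial extremal $u_r$ of $S_p(B_r)$, the normal derivative on $\partial B_R$ and relate it via the equation $|\nabla u|^{p-2}\partial u/\partial\nu = S_p(B_r)|u|^{p-2}u$ on $\partial B_R$ together with the ODE to obtain the identity that produces the factor $1 - \frac{N-1}{R}S_p(B_r)$; (ii) obtain a lower bound for $S_q(B_r)$ valid for $q$ near $p$, or rather an inequality of the form $S_q(B_r) \ge$ (something involving $S_p$) — here Hölder's inequality on $\partial B_R$ relating $\|u\|_{L^q(\partial\O)}$ and $\|u\|_{L^p(\partial\O)}$ enters, and the geometry $|\partial B_R| = $ const $\cdot R^{N-1}$ is used; (iii) produce an explicit competitor hole — I'd guess a ball of the same volume tangent internally to $\partial B_R$, or a small ball pushed toward the boundary — and build a test function supported away from it, estimating its Rayleigh quotient from above by quantities that, for $q > Q(R)$, fall strictly below $S_q(B_r)$.

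The main obstacle I anticipate is step (iii): constructing an explicit admissible competitor together with a test function whose energy quotient can be bounded *from above* by something genuinely smaller than $S_q(B_r)$, for all $q > Q(R)$ simultaneously and with the threshold coming out exactly as in \eqref{Q}. One delicate point is that for large $q$ the extremals concentrate near the boundary $\partial\O$, so moving the hole toward the boundary should hurt — but quantifying this requires careful control of the trace norm of the test function as $q \to \infty$, presumably via the scaling behavior of $S_q$ under dilations and a careful choice of a one-dimensional (distance-to-boundary) profile. A secondary technical issue is passing from $p = q$ to general $q > Q(R)$: I would expect to use that $S_q(B_r)$, suitably renormalized (e.g. dividing by $|\partial\O|^{p/q}$ or raising to a power), is continuous and monotone in $q$ so that the strict inequality established near the threshold propagates, but making the comparison inequalities uniform in $q$ is where the estimates must be done with care. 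I would organize the final write-up as: Lemma (exact computation of $S_p(B_r)$ and the boundary normal derivatives), Lemma (upper bound for $S_q$ of the off-center competitor), and then the proof of Theorem \ref{morel} combining the two and checking that the condition $q > Q(R)$ is exactly what forces the strict inequality.
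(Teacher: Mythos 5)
There is a genuine gap here: your plan identifies some of the right raw material (the radial ODE for $u_0$, the boundary relation $|u_0'|^{p-2}u_0'=S_p u_0^{p-1}$ on $\partial B_R$, the origin of the factor $1-\frac{N-1}{R}S_p(B_r)$) but it is missing the one idea that actually drives the paper's proof and produces the threshold $Q(R)$. The paper does not construct a new competitor hole near the boundary, nor does it need a H\"older-type lower bound on $S_q(B_r)$ or any continuity/monotonicity in $q$. Instead it takes the radial extremal $u_0$ (normalized by $u_0=1$ on $\partial B_R$), \emph{translates it}, setting $U(t)(x)=u_0(x-te_1)$, which is admissible for the translated hole $B_r(te_1)$ of the same volume, and studies the Rayleigh quotient $h(t)=f(t)/g(t)$ of $U(t)$. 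By symmetry $h'(0)=0$, and an explicit computation using Gauss--Green and the radial ODE gives
$$
h''(0)=\frac{pS_p^{\frac{1}{p-1}}}{N|\partial B_R|^{\frac{p}{q}-1}}
\Bigl[1-(q-1)S_p(r)^{\frac{p}{p-1}}-\frac{N-1}{R}S_p(r)\Bigr],
$$
which is negative precisely when $q>Q(R)$. Then $t=0$ is a strict local maximum of $h$, so $S_q(B_r(te_1))\le h(t)<h(0)=S_q(B_r)$ for small $t\neq0$ (using Proposition \ref{gago} to know the radial extremal is a multiple of $u_0$), and the centered hole cannot be optimal. The dependence on $q$ enters only through the exponent $p/q$ in $g(t)=(\int_{\partial B_R}U^q\,\rd S)^{p/q}$, whose second derivative at $t=0$ contributes the factor $(q-1)$; no passage from $p=q$ to general $q$ is required.

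Your step (iii) --- the construction of an explicit off-center competitor with a hand-built test function, estimated against a H\"older lower bound for $S_q(B_r)$ --- is exactly the part you flag as the main obstacle, and as written it is not carried out; moreover it is hard to see how such a route would reproduce the sharp constant $Q(R)$ of \eqref{Q}, since $Q(R)$ arises from a second-order Taylor coefficient of the translated quotient, not from a comparison of norms. Without the translation-and-second-derivative device the proposal does not constitute a proof.
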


Finally, to study the asymptotic behavior of $Q(R)$

\begin{pr}\label{caranta}
The function $Q(R)$ has the following asymptotic behavior
$$
\lim_{R\to r}Q(R)=1^- \quad\textrm{and}\quad \lim_{R\to +\infty}Q(R)=p
$$
\end{pr}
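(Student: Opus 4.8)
The plan is to reduce the statement to the asymptotic behaviour of the map $R\mapsto S_p(B_r)$, where throughout $S_p(B_r)$ denotes the best trace constant \eqref{sqa} for $\O=B_R$, $A=B_r$ and $q=p$. Rewriting \eqref{Q} as
$$
Q(R)=S_p(B_r)^{-\frac{p}{p-1}}-\frac{N-1}{R}\,S_p(B_r)^{-\frac{1}{p-1}}+1,
$$
so that $Q(R)-1=S_p(B_r)^{-\frac{1}{p-1}}\big(S_p(B_r)^{-1}-\tfrac{N-1}{R}\big)$, it suffices to determine the two limits of $S_p(B_r)$.

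\textbf{The limit $R\to r^+$.} If $u\in W^{1,p}_{B_r}(B_R)$ then $u=0$ on $\partial B_r$, so in polar coordinates $u(R\theta)=\int_r^R\partial_\rho u(\rho\theta)\,\rd\rho$ for a.e.\ $\theta\in S^{N-1}$, and by H\"older $|u(R\theta)|^p\le(R-r)^{p-1}\int_r^R|\partial_\rho u(\rho\theta)|^p\,\rd\rho$. Multiplying by $r^{N-1}$, integrating over $S^{N-1}$, and using $|\nabla u|\ge|\partial_\rho u|$ yields
$$
\int_{B_R}\big(|\nabla u|^p+|u|^p\big)\,\rd x\ \ge\ \frac{r^{N-1}}{(R-r)^{p-1}R^{N-1}}\int_{\partial B_R}|u|^p\,\rd S,
$$
hence $S_p(B_r)\ge r^{N-1}(R-r)^{1-p}R^{1-N}\to+\infty$ as $R\to r^+$. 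Since $N\ge2$, in the displayed formula for $Q(R)-1$ we have $S_p(B_r)^{-1/(p-1)}\to0^+$ while $S_p(B_r)^{-1}-\frac{N-1}{R}\to-\frac{N-1}{r}<0$; thus $Q(R)-1<0$ for $R$ close to $r$ and $Q(R)-1\to0$, i.e.\ $\lim_{R\to r}Q(R)=1^-$.

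\textbf{The limit $R\to+\infty$.} Put $L_0:=(p-1)^{-(p-1)/p}$ and $\beta:=(p-1)^{-1/p}$, so that $(\beta^p+1)/(\beta p)=\min_{b>0}(b^p+1)/(bp)=L_0$. I claim $S_p(B_r)\to L_0$. For the upper bound use the radial competitor $u(x)=f(|x|)$ with $f(\rho)=e^{-\beta(R-\rho)}-e^{-\beta(R-r)}$; then $f(r)=0$ and $u\in W^{1,p}_{B_r}(B_R)$ (after the standard approximation by functions vanishing near $\partial B_r$), $0\le f(\rho)\le e^{-\beta(R-\rho)}$, $f'(\rho)=\beta e^{-\beta(R-\rho)}$, and $f(R)=1-e^{-\beta(R-r)}>0$; using $(R-s)^{N-1}\le R^{N-1}$ after the substitution $s=R-\rho$,
$$
\frac{\int_r^R\big(|f'|^p+|f|^p\big)\rho^{N-1}\,\rd\rho}{R^{N-1}|f(R)|^p}\ \le\ \frac{\beta^p+1}{\beta p}\cdot\frac{1}{\big(1-e^{-\beta(R-r)}\big)^p}\ =\ \frac{L_0}{\big(1-e^{-\beta(R-r)}\big)^p},
$$
which tends to $L_0$ as $R\to+\infty$; hence $\limsup_{R\to\infty}S_p(B_r)\le L_0$. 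For the lower bound set $\mu_M:=\inf\{\int_0^M(|h'|^p+|h|^p)\,\rd t:\ h\in W^{1,p}(0,M),\ h(M)=1\}$. Restricting a competitor on $[0,M_2]$ to $[M_2-M_1,M_2]$ shows $M\mapsto\mu_M$ is nondecreasing; the competitor $h(t)=e^{-\beta(M-t)}$ gives $\mu_M\le L_0$; and a compactness argument ($W^{1,p}$ bounds on bounded subintervals from the energy, continuity of the trace in one dimension, weak lower semicontinuity on each finite interval, then $M\to\infty$) identifies $\lim_M\mu_M$ with $\inf\{\int_0^\infty(|h'|^p+|h|^p):h(0)=1\}$, which by solving the Euler--Lagrange equation $(|h'|^{p-2}h')'=|h|^{p-2}h$ (decaying solution $h(t)=e^{-\beta t}$, $(p-1)\beta^p=1$) equals $L_0$. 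Now, for any $u\in W^{1,p}_{B_r}(B_R)$ and $R>r+M$, discarding the tangential part of the gradient on $B_R\setminus B_{R-M}$ and using $\rho^{N-1}\ge(R-M)^{N-1}$ there,
$$
\int_{B_R}\big(|\nabla u|^p+|u|^p\big)\,\rd x\ \ge\ (R-M)^{N-1}\mu_M\int_{S^{N-1}}|u(R\theta)|^p\,\rd\theta\ =\ \frac{(R-M)^{N-1}}{R^{N-1}}\,\mu_M\int_{\partial B_R}|u|^p\,\rd S,
$$
so $S_p(B_r)\ge\mu_M(R-M)^{N-1}R^{1-N}$; letting $R\to\infty$ and then $M\to\infty$ gives $\liminf_{R\to\infty}S_p(B_r)\ge L_0$. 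Therefore $S_p(B_r)\to L_0$, and since $\frac{N-1}{R}S_p(B_r)^{-1/(p-1)}\to0$ and $L_0^{-p/(p-1)}=p-1$, we conclude $\lim_{R\to+\infty}Q(R)=(p-1)+1=p$.

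I expect the main obstacle to be the identification of $\lim_{R\to+\infty}S_p(B_r)$ with the flat (half-line) trace constant $L_0$, and in particular its lower bound: one must introduce the auxiliary one-dimensional constants $\mu_M$, establish their monotonicity, and perform the limit $M\to\infty$ with some care. The remaining estimates are routine, and the limit $R\to r^+$ is comparatively soft.
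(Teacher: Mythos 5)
Your proof is correct, but it takes a genuinely different route from the paper's. The paper never estimates $S_p(B_r)$ variationally: using the radial profile $u_0$ of Proposition \ref{u0}, whose restriction to $B_R$ is the eigenfunction for \emph{every} $R>r$, it writes $S_p=(u_0'(R)/u_0(R))^{p-1}$ and derives the Riccati-type equation $\partial_R S_p=1-\frac{N-1}{R}S_p-(p-1)S_p^{p/(p-1)}$ with $S_p\to+\infty$ as $R\to r$; the limit at $r$ is then read off from \eqref{Q}, and the limit at infinity follows from a phase-line analysis of this ODE (at most one critical point, necessarily a minimum, whence $S_p'\to0$ and $S_p^{p/(p-1)}\to\frac{1}{p-1}$) combined with the identity $S_p=(Q(R)-p)S_p^{p/(p-1)}$. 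You instead determine both limits of $S_p(B_r)$ by direct test-function arguments: a ray-wise Poincar\'e inequality for $R\to r^+$, and matching upper and lower bounds identifying $\lim_{R\to\infty}S_p(B_r)$ with the half-line trace constant $L_0=(p-1)^{-(p-1)/p}$, which is exactly the value the ODE analysis produces. Your route is more self-contained (it uses neither the nontrivial fact that one profile serves as eigenfunction for all radii nor the simplicity of $S_p$), and the $R\to\infty$ limit acquires a transparent interpretation as a flattening of the boundary; the paper's route is shorter once Proposition \ref{u0} is available and yields extra information your argument does not, namely the monotonicity structure of $R\mapsto S_p$ and the one-sided nature of the convergence $Q(R)\to p^{\pm}$ --- though the Proposition as stated only claims the two-sided limit, so nothing required is missing. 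The one step you leave schematic, the identification $\lim_{M\to\infty}\mu_M=L_0$ via weak compactness on expanding intervals, is standard and correct as sketched; note that $\mu_M<L_0$ strictly for every finite $M$, so the passage to the limit there is genuinely needed, as you anticipate.
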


Observe that $Q(R)<1$ for $R$ close to $r$ and therefore the
symmetric hole with a radial extremal is not an optimal
configuration for $R$ close to $r.$

\section{Proof  of Theorem \ref{ibarra}}

\subsection{Preliminary Results}

The proof of Theorem \ref{ibarra} require some technical results.
In this subsection we use some ideas from \cite{GMSL}.

Given $u\in W^{1,p}_{A_t}(\Omega )\setminus\wpc$  we consider $v =
u \circ\psi_t$, so $v\in\wpa\setminus\wpc$ and $\nabla v^T= \,
^T\psi_t'\nabla(u \circ\psi_t)^T,$ where $\psi_t'$ denotes the
differential matrix of $\psi_t$ and $^T A$ is the transpose of
matrix A. Thus, by the change of variables formula, we have that
$$
\int_\O|\nabla u|^p + |u|^p \, \rd x = \int_\O \{|^T[\psi_t']^{-1}\nabla v^T|^p + |v|^p\}J(\psi_t)\,\rd x,
$$
here $J(\psi_t)$ is the usual Jacobian of $\psi_t$. Moreover, since supp$(V)\subset \O$, we have that
$$
\int_{\PO}|u|^q\,\rd S = \int_{\PO} |v|^q \, \rd S.
$$

In \cite{Henrot} are proved the following asymptotic formulas
\begin{align}
\label{inversa} [\psi_t']^{-1}(x) &= Id - tV'(x) + o(t),\\
\label{jacobiano} J(\psi_t)(x) &= 1 + t\,\textrm{div} V(x) + o(t).
\end{align}

Then, by \eqref{inversa} and \eqref{jacobiano}, we have
\begin{eqnarray*}
\int_\O |v|^pJ(\psi_t)\,\rd x &=& \int_\O |v|^p\{1 + t\,\textrm{div} V + o(t)\}\,\rd x\\
&=&\int_\O |v|^p\,\rd x +t\int_\O |v|^p\textrm{div} V\,\rd x + o(t)
\end{eqnarray*}
and
\begin{align*}
\int_\O |^T[\psi_t']^{-1}\nabla v^T|^p J(\psi_t)\,\rd x &
= \int_\O |[Id - t \,^T V' + o(t)]\nabla v^T|^p \{1 + t\,\textrm{div} V + o(t)\} \, \rd x\\
&=\int_\O |\nabla v - t \, ^T V'\nabla v^T  + o(t)|^p\{1 + t\,\textrm{div} V + o(t)\} \, \rd x,
\end{align*}
since
$$
|\nabla v - t  \,^T V'\nabla v^T   + o(t)|^p=|\nabla v|^p
-pt|\nabla v_t|^{p-2}\langle\nabla v,\, ^T V'\nabla v^T \rangle+
o(t)
$$
we obtain that
\begin{align*}
\int_\O |^T[\psi_t']^{-1}\nabla v^T|^p J(\psi_t)\,\rd x&=\int_\O
|\nabla v|^p\,\rd x + t\int_\O|\nabla v|^p\textrm{div}V\,\rd x\\
&-pt\int_\O|\nabla v|^{p-2}\langle\nabla v,\,  ^T V' \nabla v^T
\rangle\,\rd x + o(t).
\end{align*}
Thus, we conclude
\begin{align*}
\int_\O|\nabla u|^p + |u|^p \, \rd x &= \int_\O \{|^T[\psi_t']^{-1}\nabla v^T|^p + |v|^p\}J(\psi_t)\,\rd x\\
&=\int_\O |v|^p\,\rd x  + \int_\O |\nabla v|^p\,\rd x
+t\int_\O\{|\nabla v|^p+|v|^p\}\textrm{div} V \, \rd x \\
&-pt\int_\O|\nabla v|^{p-2}\langle\nabla v,\, ^TV'\nabla v^T\rangle\,\rd x + o(t).
\end{align*}
Therefore, we can rewrite \eqref{sqt} as
\begin{equation}\label{sq}
S_q(t)=\inf_{v\in\wpa\setminus\wpc}\{\rho(v) + t\gamma(v)\}
\end{equation}
where
$$
\rho(v) = \dfrac{ \int_\O |\nabla v|^p + |v|^p \, \rd
x}{\Big\{\int_{\PO} |v|^q \, \rd S\Big\}^{p/q}} ,
$$
and
$$
\gamma(v) = \dfrac{\int_\O \{|\nabla v|^p + |v|^p\}\textrm{div} V
\, \rd x -p\int_\O |\nabla v|^{p-2}\langle\nabla v,\, ^TV'\nabla
v^T\rangle \, \rd x}{\Big\{\int_{\PO} |v|^q\,\rd S\Big\}^{p/q}} +
O(t).
$$

Given $t\ge0$, let $v_t\in\wpa\setminus\wpc$ such that $\|v_t\|_{\lqb}=1$ and
$$
S_q(t)=\varphi(t) + t\phi(t),
$$
where
$$
\varphi(t) = \rho(v_t) \textrm{ and } \phi(t) = \gamma(v_t)
\quad\forall t\ge0.
$$
We observe that $\varphi,\phi:\mathbb{R}_{\ge0}\to\mathbb{R}$ and

\begin{lem}\label{decreciente}
The function $\phi$ is nonincreasing.
\end{lem}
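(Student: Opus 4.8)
The plan is to read off the monotonicity from the variational structure in \eqref{sq}: there $S_q(t)$ is written as the infimum over $v\in\wpa\setminus\wpc$ of the maps $t\mapsto\rho(v)+t\gamma(v)$, which (up to the lower order correction discussed below) are affine in $t$. An infimum of affine functions is concave, so $S_q$ is essentially concave in $t$, and $\phi(t)=\gamma(v_t)$ is exactly the slope of the affine map that touches the graph of $S_q$ from above at $t$; the slopes of supporting lines of a concave function are nonincreasing. I would, however, present the argument as a direct two--point comparison, which makes no appeal to concavity and keeps the error terms under control.

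So fix $0\le s<t$ and let $v_s,v_t\in\wpa\setminus\wpc$ be the normalized extremals, $\|v_s\|_{\lqb}=\|v_t\|_{\lqb}=1$, that realize $S_q(s)$ and $S_q(t)$ respectively. Since both $v_s$ and $v_t$ are admissible in each of the two minimization problems, \eqref{sq} gives
$$
S_q(t)=\rho(v_t)+t\gamma(v_t)\ \le\ \rho(v_s)+t\gamma(v_s),
\qquad
S_q(s)=\rho(v_s)+s\gamma(v_s)\ \le\ \rho(v_t)+s\gamma(v_t).
$$
The first inequality rearranges to $\rho(v_s)-\rho(v_t)\ge t\bigl(\gamma(v_t)-\gamma(v_s)\bigr)$ and the second to $\rho(v_s)-\rho(v_t)\le s\bigl(\gamma(v_t)-\gamma(v_s)\bigr)$; chaining these two bounds on $\rho(v_s)-\rho(v_t)$ yields $(t-s)\bigl(\gamma(v_t)-\gamma(v_s)\bigr)\le 0$. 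As $t-s>0$, this forces $\phi(t)=\gamma(v_t)\le\gamma(v_s)=\phi(s)$, i.e. $\phi$ is nonincreasing.

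The only delicate point is the $O(t)$ term hidden inside $\gamma$, which makes the "slope" itself depend on $t$; I would deal with this by noting that \eqref{sqt} defines $S_q(t)$ exactly as an infimum of Rayleigh quotients of the form $\rho(v)+t\gamma(v)$, that the displayed comparison uses only that $v_s$ (resp.\ $v_t$) competes in the problem at time $t$ (resp.\ $s$), and that these correction terms are of lower order and get absorbed in the $o(t)$ bookkeeping --- alternatively, one runs the same argument with the leading, $t$--independent part of $\gamma$. The auxiliary facts used here --- that the infimum in \eqref{sqt} is attained and can be taken at a normalized $v_t\in\wpa\setminus\wpc$ --- follow from the compact trace embedding \eqref{sobolev}. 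I expect the main obstacle to be precisely this handling of the $t$--dependence in the error term; the monotonicity identity itself is a two--line algebraic manipulation.
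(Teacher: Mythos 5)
Your two--point comparison is exactly the paper's argument: the paper writes $S_q(t_1)\le\varphi(t_2)+t_1\phi(t_2)$ and $S_q(t_2)\le\varphi(t_1)+t_2\phi(t_1)$ and subtracts to get $(t_2-t_1)(\phi(t_1)-\phi(t_2))\ge 0$, which is your chaining step in the notation $\varphi(t)=\rho(v_t)$, $\phi(t)=\gamma(v_t)$. The proof is correct and takes essentially the same approach, including the same (formal) treatment of the lower-order term inside $\gamma$.
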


\begin{proof}
Let $0\le t_1\le t_2$.  By \eqref{sq}, we have that
\begin{eqnarray}
\label{1d}\varphi(t_2) + t_1 \phi (t_2) &\ge& S_q(t_1)=\varphi (t_1) + t_1 \phi(t_1)\\
\label{2d}\varphi(t_1) + t_2 \phi (t_1) &\ge& S_q(t_2)=\varphi (t_2) + t_2 \phi(t_2).
\end{eqnarray}
Subtracting \eqref{1d} from \eqref{2d}, we get
$$
(t_2 - t_1 ) \phi(t_1) \ge (t_2 - t_1 ) \phi(t_2).
$$
Since $t_2 - t_1 \ge 0,$ we obtain
$$
 \phi(t_1) \ge \phi(t_2).
$$
This ends the proof.
\end{proof}

\begin{ob}\label{acotado}
Since $\phi$ is nonincreasing, we have
$$
\phi(t)\le\phi(0) \quad \forall t\ge0,
$$
and there exists
$$
\phi(0^+)=\lim_{t\to0^+}\phi(t).
$$
\end{ob}

\begin{co}\label{creciente}
The function $\varphi$ is nondecreasing.
\end{co}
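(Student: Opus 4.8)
The plan is to read off the monotonicity of $\varphi$ directly from inequality \eqref{1d}, which was already established in the proof of Lemma \ref{decreciente} by testing the variational characterization \eqref{sq} of $S_q(t_1)$ against the normalized extremal $v_{t_2}$. Concretely, fix $0\le t_1\le t_2$ and rearrange \eqref{1d} as
$$
\varphi(t_2)-\varphi(t_1)\ge t_1\big(\phi(t_1)-\phi(t_2)\big).
$$
By Lemma \ref{decreciente} the function $\phi$ is nonincreasing, so $\phi(t_1)-\phi(t_2)\ge0$; since moreover $t_1\ge0$, the right-hand side is nonnegative, and hence $\varphi(t_2)\ge\varphi(t_1)$. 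As $t_1,t_2$ were arbitrary with $t_1\le t_2$, this says exactly that $\varphi$ is nondecreasing.

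I would point out that the companion inequality \eqref{2d} is not the useful one here: rearranging it bounds $\varphi(t_1)-\varphi(t_2)$ from below by the quantity $t_2(\phi(t_2)-\phi(t_1))$, which is nonpositive and therefore carries no information. So, unlike in the proof of Lemma \ref{decreciente} where \eqref{1d} and \eqref{2d} were subtracted from one another, here one must keep \eqref{1d} by itself.

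I do not expect any real obstacle — the statement is a one-line consequence of Lemma \ref{decreciente}. The only thing to keep in mind is that the argument relies on $t_1\ge0$, which is automatic since $\varphi$ and $\phi$ are defined on $\mathbb{R}_{\ge0}$; in particular the borderline case $t_1=0$ reduces to the trivial assertion $\varphi(t_2)\ge\varphi(0)$.
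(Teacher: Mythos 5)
Your proof is correct and is essentially identical to the paper's: the paper likewise tests the variational characterization \eqref{sq} of $S_q(t_1)$ against $v_{t_2}$ (its inequality \eqref{3d} is your \eqref{1d}), rearranges to $\varphi(t_2)-\varphi(t_1)\ge t_1(\phi(t_1)-\phi(t_2))$, and invokes Lemma \ref{decreciente}. No differences worth noting.
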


\begin{proof}  Let $0\le t_1\le t_2.$ Again, by \eqref{sq}, we have that
\begin{equation}
\label{3d}\varphi(t_2) + t_1 \phi (t_2) \ge S_q(t_1)=\varphi (t_1) + t_1 \phi(t_1)
\end{equation}
so
$$
\varphi(t_2)-\varphi(t_1)\ge t_1(\phi(t_1)-\phi(t_2)).
$$
Since $0\le t_1\le t_2,$ by Lemma \ref{decreciente}, we have that $\phi(t_1)-\phi(t_2)\ge0.$ Then
$$
\varphi(t_2)-\varphi(t_1)\ge0
$$
that is what we wished to prove.
\end{proof}

Now we can prove that $S_q(t)$ is continuous at $t=0.$

\begin{te}\label{continuidad}
The function $S_q(t)$ is continuous at $t=0,$ i.e.,
$$
\lim_{t\to0^+}S_q(t)=S_q(0).
$$
\end{te}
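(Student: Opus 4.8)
The plan is to sandwich $S_q(t)$ between two quantities that both tend to $S_q(0)$ as $t\to 0^+$, using the representation \eqref{sq} together with the monotonicity of $\varphi$ and $\phi$ already established. First, note that $v_t$ (hence $\varphi(t)$ and $\phi(t)$) is well defined for every $t\ge 0$: by the compact trace embedding \eqref{sobolev} applied to $W^{1,p}_{A_t}(\O)$, the infimum $S_q(t)=S_q(A_t)$ is attained, so a normalized minimizer $v_t$ exists.

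For the upper bound I would fix once and for all a normalized extremal $u_0$ for $S_q(0)=S_q(A)$, so that $u_0\in\wpa\setminus\wpc$, $\|u_0\|_{\lqb}=1$ and $\rho(u_0)=S_q(0)$. Using $u_0$ as a competitor in \eqref{sq} gives
$$
S_q(t)\le \rho(u_0)+t\,\gamma(u_0)=S_q(0)+t\,\gamma(u_0).
$$
Since $V$ is globally Lipschitz with compact support in $\O$, the functions $\textrm{div}\,V$ and $V'$ are bounded, so the leading coefficient of $t$ in $\gamma(u_0)$ is finite (it is controlled by $\int_\O |\nabla u_0|^p+|u_0|^p\,\rd x<\infty$), and the remainder coming from \eqref{inversa}--\eqref{jacobiano} evaluated at the fixed function $u_0$ is $o(1)$ as $t\to0^+$. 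Hence $t\,\gamma(u_0)\to 0$ and $\limsup_{t\to0^+}S_q(t)\le S_q(0)$.

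For the lower bound I would use $S_q(t)=\varphi(t)+t\,\phi(t)$. By Corollary \ref{creciente} the function $\varphi$ is nondecreasing, and since $\varphi(0)=\rho(v_0)=S_q(0)$ we get $\varphi(t)\ge S_q(0)$ for all $t\ge 0$. By Lemma \ref{decreciente} and Remark \ref{acotado}, $\phi$ is nonincreasing with $\phi(t)\le\phi(0^+)\le\phi(0)<\infty$; moreover, fixing any $\delta>0$ we also have $\phi(t)\ge\phi(\delta)$ for $0<t\le\delta$, so $\phi$ is bounded on $(0,\delta]$ and therefore $t\,\phi(t)\to 0$ as $t\to0^+$. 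Consequently $S_q(t)\ge S_q(0)+t\,\phi(t)$ gives $\liminf_{t\to0^+}S_q(t)\ge S_q(0)$. Combining the two estimates yields $\lim_{t\to0^+}S_q(t)=S_q(0)$.

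There is no substantial obstacle here: the monotonicity lemmas do the essential work. The only point requiring mild care is the upper bound, namely making precise that the higher-order remainder in $\gamma(u_0)$ is genuinely negligible as $t\to0^+$; this is exactly where the global Lipschitz bound on $V$ together with the finiteness of $\|u_0\|_{\wp}$ enter. Everything else is an immediate consequence of Corollary \ref{creciente}, Lemma \ref{decreciente} and Remark \ref{acotado}.
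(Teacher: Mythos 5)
Your proof is correct and follows essentially the same route as the paper: both arguments sandwich $S_q(t)-S_q(0)$ between $t\,\phi(t)$ (via the monotonicity of $\varphi$ from Corollary \ref{creciente}) and $t\,\phi(0)$ (using the extremal for $S_q(0)$ as a competitor in \eqref{sq}), and conclude from the boundedness of $\phi$ near $0$ guaranteed by Lemma \ref{decreciente} and Remark \ref{acotado}. Your added remarks on why $\gamma(u_0)$ is finite and why $\phi$ is bounded below on $(0,\delta]$ merely make explicit what the paper leaves implicit.
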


\begin{proof}
Given $t\ge0$ so, by Corollary \ref{creciente},
$$
S_q(t)-Sq(0)= \varphi(t)+t\phi(t)-\varphi(0)\ge t\phi(t).
$$
On the other hand, by \eqref{sq}, we have that
$$
S_q(t)\le \varphi(0)+t\phi(0)=S_q(0) + t\phi(0).
$$
Then
$$
t\phi(t)\le S_q(t)-Sq(0) \le  t\phi(0).
$$
Thus, by Remark \ref{acotado},
$$
\lim_{t\to 0^+} S_q(t) - S_q(0)=0.
$$
This finishes  the proof.
\end{proof}

Thus, from Remark \ref{acotado} and Theorem \ref{continuidad}, we obtain the following corollary:

\begin{co}
The function $\varphi$ is continuous at $t=0,$ i.e.,
$$
\lim_{t\to0^+}\varphi(t)=\varphi(0).
$$
\end{co}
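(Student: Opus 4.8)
The plan is to read off the continuity of $\varphi$ at $0$ directly from the pieces already in place, using the decomposition $S_q(t)=\varphi(t)+t\phi(t)$ together with Theorem \ref{continuidad}, Lemma \ref{decreciente} and Remark \ref{acotado}.

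First I would rewrite $\varphi(t)=S_q(t)-t\phi(t)$ for every $t\ge 0$. By Theorem \ref{continuidad} the first term satisfies $S_q(t)\to S_q(0)=\varphi(0)$ as $t\to 0^+$, so the whole matter reduces to showing that $t\phi(t)\to 0$ as $t\to0^+$.

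Second, I would bound $\phi$ on a right neighbourhood of the origin. By Lemma \ref{decreciente} the function $\phi$ is nonincreasing, so on an interval $(0,\delta]$ it is trapped between $\phi(\delta)$ and $\phi(0)$; equivalently, Remark \ref{acotado} already guarantees that $\phi(0^+)=\lim_{t\to 0^+}\phi(t)$ exists and is finite (it is at most $\phi(0)$). Either way there is a constant $C$ with $|\phi(t)|\le C$ for all small $t>0$, whence $|t\phi(t)|\le Ct\to 0$.

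Combining the two observations gives $\varphi(t)=S_q(t)-t\phi(t)\to S_q(0)-0=\varphi(0)$, which is the assertion. I do not expect any genuine obstacle here: the substantive content is already packaged in Theorem \ref{continuidad} (whose proof rests on the sandwich $t\phi(t)\le S_q(t)-S_q(0)\le t\phi(0)$) and in the monotonicity of $\phi$. The one point worth flagging is that one should not attempt the stronger statement that $\phi$ itself is continuous at $0$ — that is not needed and is not what the argument provides; mere boundedness of $\phi$ near $0$, which is immediate from Lemma \ref{decreciente}, suffices to kill the term $t\phi(t)$.
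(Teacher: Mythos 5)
Your argument is correct and coincides with the paper's own proof: the paper likewise writes $\varphi(t)-\varphi(0)=S_q(t)-S_q(0)-t\phi(t)$ and concludes from Theorem \ref{continuidad} together with the boundedness of $\phi$ near $0$ (Remark \ref{acotado}) that the right-hand side tends to $0$. Your extra remark that only boundedness of $\phi$ on $(0,\delta]$ is needed, not its continuity at $0$, is accurate and matches the role Remark \ref{acotado} plays in the paper.
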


\begin{proof}\label{cocont}
We observe that
$$
\varphi(t)-\varphi(0)= S_q(t)-S_q(0) - t \phi(t)
$$
then, by Remark \ref{acotado} and Theorem \ref{continuidad},
$$
\lim_{t\to0^+}\varphi(t)-\varphi(0)=0.
$$
That proves the result.
\end{proof}

Finally, we prove the following:

\begin{te}\label{diferenciable}
The function $\varphi$ is differentiable at $t=0$ and
$$
\frac{\rd\varphi}{\rd t}(0)=0.
$$
\end{te}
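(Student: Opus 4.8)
The plan is to squeeze the right difference quotient. By Corollary \ref{creciente} the function $\varphi$ is nondecreasing, hence $\varphi(t)\ge\varphi(0)$ and $\tfrac{\varphi(t)-\varphi(0)}{t}\ge0$ for $t>0$. On the other hand, from $S_q(0)=\varphi(0)$, $S_q(t)=\varphi(t)+t\phi(t)$ and the bound $S_q(t)-S_q(0)\le t\phi(0)$ obtained inside the proof of Theorem \ref{continuidad}, one gets $\varphi(t)-\varphi(0)\le t\bigl(\phi(0)-\phi(t)\bigr)$, so
$$
0\ \le\ \frac{\varphi(t)-\varphi(0)}{t}\ \le\ \phi(0)-\phi(t)\qquad(t>0).
$$
Thus it suffices to choose the normalized extremal defining $\varphi,\phi$ at $t=0$ so that $\phi$ is right-continuous there, i.e.\ $\phi(0)=\phi(0^+)$ (the limit $\phi(0^+)$ exists by Remark \ref{acotado}); this is precisely the extremal $u$ promised in Theorem \ref{ibarra}.

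To produce it I would argue by compactness. Fix any $t_n\to0^+$ and let $v_{t_n}\in\wpa\setminus\wpc$ be the corresponding normalized minimizers, $\|v_{t_n}\|_{\lqb}=1$. Since $\int_\O|\nabla v_{t_n}|^p+|v_{t_n}|^p\,\rd x=\rho(v_{t_n})=\varphi(t_n)$ and $\varphi$ is continuous at $0$ (the corollary following Theorem \ref{continuidad}), the sequence $(v_{t_n})$ is bounded in $\wp$; along a subsequence $v_{t_n}\rightharpoonup w$ in $\wp$, and by the compact trace embedding $v_{t_n}\to w$ in $\lqb$, so $\|w\|_{\lqb}=1$. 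Hence $w\notin\wpc$, while $w\in\wpa$ because $\wpa$ is a closed, hence weakly closed, subspace. Weak lower semicontinuity of $v\mapsto\int_\O|\nabla v|^p+|v|^p\,\rd x$ gives $\rho(w)\le\liminf_n\rho(v_{t_n})=\varphi(0)=S_q(A)$, while $\rho(w)\ge S_q(A)$ by definition of $S_q(A)$; therefore $\rho(w)=S_q(A)$, so $w$ is a normalized extremal, and moreover $\int_\O|\nabla v_{t_n}|^p+|v_{t_n}|^p\,\rd x=\varphi(t_n)\to S_q(A)=\int_\O|\nabla w|^p+|w|^p\,\rd x$. Since $\wp$ is uniformly convex, weak convergence together with convergence of the norms upgrades to strong convergence $v_{t_n}\to w$ in $\wp$.

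Now write $\gamma(v)=\gamma_0(v)+O(t)$, where $\gamma_0(v)$ is the quotient appearing in the definition of $\gamma$. With $v_{t_n}\to w$ strongly in $\wp$ and $\|v_{t_n}\|_{\lqb}=1$, the two integrals in the numerator of $\gamma_0$ evaluated at $v_{t_n}$ converge to those for $w$ (their integrands are continuous in $\nabla v$ and $v$, and $V$, $V'$, $\textrm{div}\,V$ are bounded), the denominator tends to $1$, and the $O(t_n)$ term vanishes; hence $\phi(t_n)=\gamma(v_{t_n})\to\gamma_0(w)$. Since the limit $\phi(0^+)$ exists, it equals $\gamma_0(w)$, independently of the chosen subsequence. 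We may then fix the normalized extremal in the definition of $\varphi,\phi$ at $t=0$ to be $v_0:=w$ — every preceding result is valid for any normalized extremal realizing $S_q(A)$ — whence $\phi(0)=\gamma(v_0)=\gamma_0(w)=\phi(0^+)$. Therefore $\phi(0)-\phi(t)\to0$ as $t\to0^+$, and the squeeze above yields $\lim_{t\to0^+}\tfrac{\varphi(t)-\varphi(0)}{t}=0$, i.e.\ $\varphi$ is differentiable at $0$ with $\tfrac{\rd\varphi}{\rd t}(0)=0$.

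The main obstacle is this compactness step together with the observation that the reference extremal must be taken to be the limit $w$ rather than an arbitrary extremal: it is exactly this choice that forces $\phi(0^+)=\phi(0)$, and it is why Theorem \ref{ibarra} only asserts the existence of \emph{some} normalized extremal. Within the step, the delicate points are checking that $w$ is an admissible competitor ($w\in\wpa$, $w\notin\wpc$) and promoting weak to strong convergence in $\wp$ via uniform convexity, which is what makes the $p$-homogeneous gradient integrals in $\gamma$ pass to the limit.
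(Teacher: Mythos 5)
Your proof is correct, but it takes a genuinely different route from the paper's. The paper never needs to know that $\phi(0)=\phi(0^+)$: it introduces a second parameter $0<r<t$, plays the two competitor inequalities $S_q(r)\le\varphi(t)+r\phi(t)$ and $S_q(t)\le\varphi(r)+t\phi(r)$ against each other to get $\tfrac{r}{t}(\phi(r)-\phi(t))\le\tfrac{\varphi(t)-\varphi(r)}{t}\le\phi(r)-\phi(t)$, and then sends $r\to0^+$ \emph{first}, which replaces $\phi(0)$ by $\phi(0^+)$ in the upper bound using only the already-established monotonicity of $\phi$ and continuity of $\varphi$ at $0$. The subsequent limit $t\to0^+$ then closes the squeeze with no compactness at all, and the argument is insensitive to which normalized extremal is chosen as $v_0$. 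You instead keep the single-parameter bound $\varphi(t)-\varphi(0)\le t(\phi(0)-\phi(t))$ and force $\phi(0)=\phi(0^+)$ by taking $v_0$ to be a strong $W^{1,p}$ limit of the $v_{t_n}$; that requires the full compactness/uniform-convexity machinery (weak closedness of $\wpa$, the compact trace embedding, lower semicontinuity, norm convergence upgrading weak to strong convergence, and continuity of the $p$-homogeneous integrals in $\gamma$). Your argument is sound — the preceding lemmas indeed hold for any choice of normalized extremal at $t=0$, and $\gamma_0(w)=\phi(0^+)$ is subsequence-independent because $\phi(0^+)$ exists — but it is heavier than necessary for this statement: you are essentially importing Step 2 of the proof of Theorem \ref{ibarra} into this lemma. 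What your route buys is that the compactness work is not wasted (it is needed anyway to identify $\phi(0^+)$ as an explicit integral), and it makes transparent \emph{why} Theorem \ref{ibarra} asserts the formula only for \emph{some} extremal; what the paper's route buys is a short, purely order-theoretic proof that keeps the analysis confined to a single later step.
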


\begin{proof}
Let $0<r<t$. By \eqref{sq}, we get
$$
S_q(r) = \varphi(r) + r \phi(r)\le \varphi(t) + r \phi(t),
$$
and
$$
S_q(t) = \varphi(t) + t \phi(t)\le \varphi(r) + t \phi(r).
$$
So
$$
\dfrac{r}{t}(\phi(r)-\phi(t)) \le
\dfrac{\varphi(t)-\varphi(r)}{t}\le \phi(r)-\phi(t)
$$
hence, taking limits when $r\to0^+$, by Remark \ref{acotado} and Corollary \ref{cocont}, we have that
$$
0 \le \dfrac{\varphi(t)-\varphi(0)}{t}\le \phi(0^+)-\phi(t).
$$
Now, taking limits when $t\to0^+$, and again by Remark \ref{acotado}, we get
$$
\lim_{t\to0^+}\dfrac{\varphi(t)-\varphi(0)}{t}=0
$$
as we wanted to show.
\end{proof}

\subsection{Proof of Theorem \ref{ibarra}} We proceed in three steps.

\noindent\textbf{Step 1}. We show that $S_q(t)$ is differentiable
at $t=0$ and
$$
S_q'(0) = \phi(0^+).
$$
We have that
$$
\dfrac{S_q(t)-S_q(0)}{t}=\dfrac{\varphi(t)-\varphi(0)}{t} - \phi(t).
$$
Then, by Remark \ref{acotado} and Theorem \ref{diferenciable},
$$
S_q'(0) =\lim_{t\to0^+}\dfrac{S_q(t)-S_q(0)}{t}=\phi(0^+).
$$

\noindent\textbf{Step 2.} We show that there exists $u$ extremal
for $S_q(A)$ such that $\|u\|_{\lqb} =1$ and
$$
\phi(0^+) = \int_\O (|\nabla u|^p +|u|^p)\textnormal{div}V \, \rd
x- p\int_\O |\nabla u|^{p-2}\langle\nabla u, \,^TV' \nabla u
\rangle \, \rd x.
$$
By Theorem \ref{cocont}
\begin{equation}
\label{con}\|v_t\|^p_{\wp} = \varphi(t)\to\varphi(0)=S_q(0)
\textrm{ when }t\to0^+.
\end{equation}
Then there exists $u\in\wp$ and $t_n\to 0^+$ when $n\to\infty$ such that
\begin{eqnarray}
\label{code}v_{t_n}&\rightharpoonup& u \textrm{ weakly in } \wp,\\
\label{cofu}v_{t_n}&\to& u \textrm{ strongly in } \lqb,\\
\label{copu}v_{t_n}&\to& u \textrm{ a.e. in  } \O.
\end{eqnarray}
By \eqref{cofu} and \eqref{copu}, $u\in\wpa$ and $\|u\|_{\lqb}=1$ and by \eqref{code}
$$
S_q(0)=\lim_{n\to\infty}\|v_{t_{n}}\|^p_{\wp}\ge\|u\|^p_{\wp}\ge S_q(0),
$$
then
\begin{equation}
\label{igual}S_q(0)=\|u\|^p_{\wp}.
\end{equation}
Moreover, by \eqref{con}, \eqref{code} and \eqref{igual}, we have that
$$
v_{t_n}\to u \textrm{ strongly in } \wp.
$$
Therefore
\begin{align*}
\phi(0^+)&=\lim_{n\to\infty}\phi(v_{t_n})\\
& = \int_\O (|\nabla u|^p +|u|^p)\textnormal{div}V \, \rd x-
p\int_\O |\nabla u|^{p-2}\langle\nabla u,  \,^TV'\nabla u^T
\rangle \, \rd x.
\end{align*}

\noindent\textbf{Step 3}. Finally, we show that
\begin{align*}
S_q'(0) =&\int_\O (|\nabla u|^p +|u|^p)\textnormal{div}V \, \rd x -
p\int_\O |\nabla u|^{p-2}\langle\nabla u,  \,^TV' \nabla u^T \rangle \, \rd x\\
=& -\int_{\partial A} \Big|\frac{\partial u}{\partial\nu}\Big|^p\langle V, \nu \rangle \, \rd S.
\end{align*}

To show this we require that $u\in C^2.$ However, this is not true. Since $u$ is an esxtremal for $S_q(A)$ and $\|u\|_{L^q(\O)}=1$, we known that $u$ is weak solution to 
$$
\begin{cases}
-\Delta_p u + |u|^{p-2}u = 0 & \text{in } \O\setminus\overline{A}, \\
|\nabla u|^{p-2}\frac{\partial u}{\partial \nu} = S_q(A) |u|^{q-2}u & \text{on }\partial\O,\\
u=0 & \textrm{on }\partial A,
\end{cases}
$$
and by \cite{L} we get that $u$ belongs to the class $C^{1,\delta}$ for some $0<\delta<1.$

In order to overcome this difficulty, we proced as follows. We consider the regularized prblems
\begin{equation}\label{rusas}
\begin{cases}
-\textrm{div}(|\nabla u^{\varepsilon}|^2+\varepsilon^2)^{(p-2)/2}) + |u^\varepsilon|^{p-2}u^\varepsilon = 0 & \text{in } \O\setminus\overline{A}, \\
|\nabla u^{\varepsilon}|^2+\varepsilon^2)^{(p-2)/2}\frac{\partial u^\varepsilon}{\partial \nu} = S_q(A) |u|^{q-2}u & \text{on }\partial(\O\setminus\overline{A}),
\end{cases}
\end{equation}
It is well known that the solution $u^\varepsilon$ to \eqref{rusas} is of class $C^{2,\rho}$ for some $0<\rho<1$ (see \cite{LSU}).

Then, we can perform all of our computations with the functions $u^\varepsilon$ and pass to the limit as $\varepsilon\to0$ at the end.

We have chosen to work formally with the function $u$ in order to make our arguments more transparent and leave the details to the reader. For a similar approach, see \cite{GMSL}.

Since
\begin{eqnarray*}
\textnormal{div}(|u|^pV)&=&|u|^p\textnormal{div}V + p|u|^{p-2}u\langle\nabla u, V\rangle,\\
\textnormal{div}(|\nabla u|^pV)&=&|\nabla u|^p\textnormal{div}V + p|\nabla u|^{p-2}\langle\nabla u D^2 u, V\rangle,
\end{eqnarray*}
we have that
\begin{align*}
\int_\O (|\nabla u|^p +|u|^p)\textnormal{div}V \, \rd x =& \int_\O \textnormal{div}(|u|^pV+|\nabla u|^pV)\, \rd x\\
-&p\int_\O\{|u|^{p-2}u_0\langle\nabla u, V\rangle+|\nabla u|^{p-2}\langle\nabla u D^2 u, u V\rangle \, \}\rd x.
\end{align*}
Integrating by parts, we obtain
\begin{align*}
\int_\O \textnormal{div}(|u|^pV+|\nabla u|^pV)\, \rd x=&\int_{\PO}(|u|^p+|\nabla u|^p)\langle V,\nu\rangle \, \rd S -\int_{\partial A}(|u|^p+|\nabla u|^p)\langle V,\nu\rangle \, \rd S\\
=&- \int_{\partial A}|\nabla u|^p\langle V,\nu\rangle \, \rd S.
\end{align*}
where the las equality follows from the fact that supp$(V)\subset\O$ and $u=0$ on $\partial A$.

Thus
\begin{align*}
S_q'(0)
=&- \int_{\partial A}|\nabla u|^p\langle V,\nu\rangle \, \rd S
-p\int_\O|u|^{p-2}u\langle\nabla u_0, V\rangle \rd x\\
-&p\int_\O|\nabla u|^{p-2}\langle \nabla u,\,^TV'\nabla u + ^TD^2 u V^T\rangle \, \rd x\\
=&- \int_{\partial A}|\nabla u|^p\langle V,\nu\rangle \, \rd S
-p\int_\O|u_0|^{p-2}u\langle\nabla u, V\rangle \rd x\\
-&p\int_\O|\nabla u|^{p-2}\langle \nabla u,\,\nabla(\langle\nabla u,V\rangle)\rangle \, \rd x.
\end{align*}
Since $u$ is a week solution of \eqref{roman} as $\lambda = S_q(0)$ and supp$(V)\subset\O$ we have
$$
S_q'(0)=- \int_{\partial A}|\nabla u|^p\langle V,\nu\rangle \, \rd S.
$$
Then, noticing that $\nabla u = \frac{\partial u}{\partial\nu}\nu,$ the proof is complete.\hfill$\square$

\section{Lack of Symmetry in the Ball}

In this section we consider the case where $\O =B_R$ and $A=B_r$
with $r<R$ and show Theorem \ref{datolo}, Theorem \ref{morel} and
Proposition \ref{caranta}. The proofs are based on the argument of
\cite{FBGR} and \cite{LDT} adapted to our problem. In order to
simplify notations, we write $S_q(r)$ instead $S_q(B_r).$

First we proof Theorem \ref{datolo}, for this we need the following proposition

\begin{pr}\label{unicidad}
Let $1<q<p$. The nonnegative solution of \eqref{roman} is unique.
\end{pr}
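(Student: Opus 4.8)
The plan is to run a Díaz--Saa / Picone type argument, the standard tool for sublinear problems. Fix $\lambda>0$ (for $q\neq p$ the rescaling $u\mapsto\mu u$ sends a solution with parameter $\lambda$ to one with parameter $\mu^{p-q}\lambda$, so "solution of \eqref{roman}" must be read with $\lambda$ fixed; this is exactly what is needed, since any normalized extremal solves \eqref{roman} with $\lambda=S_q(A)$). Let $u_1,u_2\ge0$ be two nontrivial solutions of \eqref{roman} with this $\lambda$. Testing \eqref{roman} with $u_i$ itself gives $\int_{\O\setminus\overline A}|\nabla u_i|^p+u_i^p\,\rd x=\lambda\int_{\PO}u_i^q\,\rd S$, so $\lambda>0$ automatically and $u_i\not\equiv0$ on $\PO$. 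By \cite{L} each $u_i\in C^{1,\delta}(\overline{\O\setminus\overline A})$, and by the strong maximum principle and Hopf's lemma (\cite{V}) we have $u_i>0$ in $\O\setminus\overline A$, $u_i>0$ on $\PO$ (interior ball condition), and $\partial u_i/\partial\nu\neq0$ on $\partial A$; hence $u_1$ and $u_2$ vanish at the same linear rate on $\partial A$ and the quotients $u_1/u_2,\ u_2/u_1$ are bounded on $\overline{\O\setminus\overline A}$.

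First I would insert in the weak formulation of \eqref{roman} for $u_1$ the test function $\varphi_1=(u_1^p-u_2^p)/u_1^{p-1}$, and in the one for $u_2$ the test function $\varphi_2=(u_2^p-u_1^p)/u_2^{p-1}$. The boundedness of the ratios together with the $C^{1,\delta}$ regularity ensures $\varphi_1,\varphi_2\in\wpa$ (they vanish on $\partial A$), so this is legitimate; alternatively one regularizes replacing $u_i$ by $u_i+\ep$ and passes to the limit, exactly as in Step 3 of the proof of Theorem \ref{ibarra}. Adding the two identities, the zeroth order terms cancel since $\int_{\O\setminus\overline A}u_1^{p-1}\varphi_1+u_2^{p-1}\varphi_2\,\rd x=\int(u_1^p-u_2^p)+(u_2^p-u_1^p)=0$, while for the gradient terms the Picone inequality for the $p$-Laplacian (Allegretto--Huang; see also Díaz--Saa) yields
$$
\int_{\O\setminus\overline A}|\nabla u_1|^{p-2}\nabla u_1\cdot\nabla\varphi_1+|\nabla u_2|^{p-2}\nabla u_2\cdot\nabla\varphi_2\,\rd x\ \ge\ 0,
$$
with equality if and only if $u_1=c\,u_2$ for a constant $c>0$. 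On the boundary one computes $u_i^{q-1}\varphi_i=u_i^{q-p}(u_i^p-u_j^p)$, so the total boundary contribution is
$$
\lambda\int_{\PO}\bigl(u_1^{q-p}-u_2^{q-p}\bigr)\bigl(u_1^p-u_2^p\bigr)\,\rd S.
$$

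The key point is the sign: since $1<q<p$, the map $t\mapsto t^{q-p}$ is strictly decreasing while $t\mapsto t^p$ is strictly increasing, so the integrand $(u_1^{q-p}-u_2^{q-p})(u_1^p-u_2^p)$ is $\le0$ pointwise, with equality only where $u_1=u_2$. Combining the three facts, $0\le(\text{gradient term})=\lambda\int_{\PO}(u_1^{q-p}-u_2^{q-p})(u_1^p-u_2^p)\,\rd S\le0$, hence every term vanishes. Equality in Picone forces $u_1=c\,u_2$, and then either the vanishing of the boundary integrand (strict monotonicity of $t\mapsto t^{q-p}$, using $u_i>0$ on $\PO$) or substituting $u_1=c\,u_2$ into the Steklov boundary condition, which gives $c^{p-1}=c^{q-1}$ and hence $c=1$ because $p\neq q$, yields $u_1\equiv u_2$.

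I expect the main obstacle to be the technical justification that $\varphi_1,\varphi_2$ are admissible test functions in $\wpa$ — that is, controlling the quotients $u_i/u_j$ up to $\partial A$, where both functions vanish — and the rigorous form of the Picone inequality in the $W^{1,p}$ framework. Both are dealt with by the $C^{1,\delta}$ regularity of \cite{L} and Hopf's lemma as above, or simply bypassed by the $u_i+\ep$ regularization already employed in Section~2; the remaining computations are short and elementary.
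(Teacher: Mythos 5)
Your proof is correct and follows essentially the same route as the paper: a Picone/D\'iaz--Saa identity applied to two nonnegative solutions, with the sign of the resulting boundary integral determined by $1<q<p$ forcing the solutions to coincide on $\PO$ and hence everywhere. The only cosmetic differences are that you symmetrize by adding the two tested identities (the paper writes one inequality and then exchanges the roles of $u$ and $v$), and you close via the equality case of Picone rather than via uniqueness of the Dirichlet problem; the regularization issue near $\partial A$ is handled in the paper exactly by the $v+\tfrac1n$ device you mention.
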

\begin{proof}
Suppose that there exist two nonnegative solutions $u$ and $v$ of
\eqref{roman}. By Remark \ref{maradona} it follows that $u,$ $v>0$
on $\partial\O.$ Let $v_n=v+\frac{1}{n}$ with $n\in\mathbb{N}$,
using first Piccone's identity (see \cite{AH}) and the weak
formulation of \eqref{roman} we have
\begin{align*}
0 & \le \int_{B_R}|\nabla u|^p\,\rd x -
\int_{B_R}|\nabla v_n|^{p-2}\nabla v_n \nabla\left(\dfrac{u^p}{v_n^{p-1}}\right)  \,\rd x\\
& = \int_{B_R}|\nabla u|^p\,\rd x -
\int_{B_R}|\nabla v|^{p-2}\nabla v \nabla\left(\dfrac{u^p}{v_n^{p-1}}\right)\,\rd x\\
& = -\int_{B_R}u^p \, \rd x + \lambda\int_{\partial B_R} u^q \, \rd S +
\int_{B_R}v^{p-1}\dfrac{u^p}{v_n^{p-1}}\,\rd x -
\lambda\int_{\partial B_R} v^{q-1}\dfrac{u^{p-1}}{v_n^{p-1}}\, \rd S\\
& \le \lambda\int_{\partial B_R} u^q \, \rd S -
\lambda\int_{\partial B_R} v^{q-1}\dfrac{u^{p-1}}{v_n^{p-1}}\, \rd
S.
\end{align*}

Thus, by the Monotone Convergence Theorem,
\begin{align*}
0&\le \int_{\partial B_R} u^q \, \rd S -\int_{\partial B_R} v^{q-1}\dfrac{u^{p-1}}{v^{p-1}}\, \rd S\\
&=\int_{\partial B_R} u^q (u^{q-1} -v^{q-1})\, \rd S.
\end{align*}

Note that the role of $u$ and $v$ in the above equation are
exchangeable. Therefore, subtracting we get
$$
0\le \int_{\partial B_R} (u^q-v^q) (u^{q-1} -v^{q-1})\, \rd S.
$$
Since $q<p$ we have that $u\equiv v$ on $\partial B_R$. Then, by
uniqueness of solution to the Dirichlet problem, we get $u\equiv
v$ in $B_R$.
\end{proof}

\begin{ob}\label{radial}
As the problem \eqref{roman} is rotationally invariant, by
uniqueness we obtain that the nonnegative solution of
\eqref{roman} must be radial. Therefore, if $\O=B_R,$ $A=B_r$ and
$1<q\le p$ we can suppose that the extremal for $S_q(r)$ found in
the Theorem \ref{ibarra} is nonnegative and radial.
\end{ob}

Now we can prove the Theorem \ref{datolo},

\begin{proof}[\textbf{Proof of Theorem \ref{datolo}}]
We consider $\O=B_R,$ $A=B_r$ and $1<q\le p.$ By Theorem
\ref{roman} and Remark \ref{radial} there exist a nonnegative and
radial normalized extremal for $S_q(r)$ such that
$$
S_q'(0) = -\int_{\partial B_r} \Big|\frac{\partial
u}{\partial\nu}\Big|^p\langle V, \nu \rangle \, \rd S.
$$
Since $u$ is radial
$$
\frac{\partial u}{\partial\nu}\equiv c \textrm{ on } \partial B_r,
$$
where $c$ is a constant.

Thus, using that we are dealing with deformations $V$ that
preserves the volume of the $B_r$, we have that
$$
S_q'(0) = -c^p\int_{\partial B_r} \langle V, \nu \rangle \, \rd S
= c^p\int_{B_r}div(V)\,\rd x=0.
$$
\end{proof}

To prove Theorem \ref{morel}, we need two previous results.

\begin{pr}\label{u0}
Let $r>0$ fixed. Then, there exists a positive radial function
$u_0$ such that
\begin{equation}\label{a1}
\begin{cases}
-\Delta_p u + |u|^{p-2}u = 0 & \text{in } \rn\setminus B_r, \\
u=0 & \textrm{on }\partial B_r.
\end{cases}
\end{equation}
This $u_0$ is unique up to a constant factor and for any $R>r$ the
restriction of $u_0$ to $B_R$ is the first eigenfunction of
\eqref{roman} with $q=p.$
\end{pr}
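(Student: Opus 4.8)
The plan is to reduce \eqref{a1} to the ordinary differential equation satisfied by the radial profile of a radial solution, and to combine the uniqueness of the associated initial value problem with the existence of first eigenfunctions of \eqref{roman} already provided by \eqref{sobolev}.

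First, a positive radial function $u_0(x)=U(|x|)$ solves $-\Delta_p u_0+|u_0|^{p-2}u_0=0$ in $\rn\setminus B_r$ with $u_0=0$ on $\partial B_r$ if and only if its profile $U$ satisfies
$$
-\big(s^{N-1}|U'|^{p-2}U'\big)'+s^{N-1}U^{p-1}=0 \quad\text{on }(r,\infty),\qquad U(r)=0.
$$
Setting $W:=s^{N-1}|U'|^{p-2}U'$ one has $W'=s^{N-1}U^{p-1}$, hence $W$ is nondecreasing as long as $U\ge0$; so if $W(r)=r^{N-1}U'(r)^{p-1}>0$ then $W>0$ and $U'>0$ on the whole interval where $U$ is defined, and $U$ is positive and strictly increasing there. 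In particular the equation is non-degenerate at every $s>r$, so among functions with $U'>0$ the initial value problem with data $U(r)=0,\ U'(r)=a>0$ has at most one solution, and by the $(p-1)$-homogeneity of the equation its solutions depend on $a$ only through the rescaling $U\mapsto cU$.

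For the existence of $u_0$ on the whole exterior domain I would glue finite-ball eigenfunctions. Fix $R>r$ and let $u^{R}$ be a first eigenfunction of \eqref{roman} with $q=p$, $\O=B_R$, $A=B_r$ (equivalently, a normalized extremal for $S_p(B_r)$), which exists by \eqref{sobolev}; since this eigenvalue is simple \cite{MR}, rotational invariance and Remark \ref{radial} give that $u^{R}$ is radial, and Remark \ref{maradona} together with the boundary point lemma of Vázquez \cite{V} give $u^{R}>0$ in $B_R\setminus\overline{B_r}$, so its profile $U_R$ satisfies $U_R(r)=0$ and $U_R'(r)>0$. Rescaling $U_R$ so that $U_R'(r)=1$ and using the uniqueness statement above, the profiles obtained for different values of $R$ coincide on their common interval; they therefore glue to a single function $U$ on $[r,\infty)$, and $u_0(x):=U(|x|)$ is a positive radial solution of \eqref{a1} whose restriction to each $B_R$ is, up to a multiplicative constant, the first eigenfunction of \eqref{roman} with $q=p$. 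This is the last assertion of the proposition.

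Finally, for uniqueness up to a constant, let $v_0$ be any positive radial solution of \eqref{a1}, with profile $V$. Then $V(r)=0$, and since $v_0>0$ the lemma of Vázquez \cite{V} forces $V'(r)>0$; both $V$ and $V'(r)\,U$ then solve the same non-degenerate initial value problem at $r$, so $V=V'(r)\,U$ and $v_0=V'(r)\,u_0$. The main obstacle is the degenerate nature of the ODE: a priori its initial value problem need not be globally solvable and its solutions need not stay positive. Both points are handled by the substitution $W=s^{N-1}|U'|^{p-2}U'$, which confines us to the non-degenerate regime $U'>0$, and by realizing $u_0$ as the limit of the genuinely existing finite-ball eigenfunctions, so that global existence and the identification with the first eigenfunction come together; the other delicate point is the use of the Hopf-type lemma \cite{V} to ensure $U'(r)>0$, which is exactly what upgrades "$U(r)=0$" to "$U$ is determined up to scaling".
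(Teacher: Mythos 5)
Your argument is correct in outline and proves everything the proposition claims, but it runs on a genuinely different uniqueness mechanism than the paper's. The paper constructs $u_0$ by solving, for each $R>r$, the Dirichlet problem $\Delta_p u_R=|u_R|^{p-2}u_R$ in $B_R\setminus\overline{B_r}$ with $u_R(R)=1$, $u_R(r)=0$, normalizing by $\frac{\partial u_R}{\partial\nu}(r)$ (nonzero by the Hopf-type lemma of \cite{V}, exactly as in your argument), and checking consistency of the family through uniqueness of the two-point \emph{boundary} value problem on each annulus --- a consequence of the comparison principle for the monotone operator $w\mapsto-\Delta_p w+|w|^{p-2}w$, which is completely insensitive to the degeneracy of the $p$-Laplacian; the identification of $u_0|_{B_R}$ with the first eigenfunction is then made at the very end via the simplicity of $S_p(r)$. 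You instead glue the first eigenfunctions themselves, normalized by the slope at $r$, and obtain consistency from uniqueness of the radial \emph{initial} value problem at $s=r$; this makes the last assertion of the proposition automatic, which is a nice economy, and your substitution $W=s^{N-1}|U'|^{p-2}U'$ gives a clean proof that $U'>0$ and $U>0$ on all of $(r,\infty)$.

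The one point you should tighten is the IVP uniqueness itself. Keeping $W>0$ removes the degeneracy coming from the factor $|U'|^{p-2}$, but for $1<p<2$ the reaction term $U\mapsto U^{p-1}$ is not Lipschitz at $U=0$, which is precisely the initial value $U(r)=0$; ``the equation is non-degenerate at every $s>r$'' does not by itself rule out branching there (compare $u'=u^{1/2}$, $u(0)=0$). Uniqueness does hold, because the prescribed slope $U'(r)=a>0$ together with the monotonicity of $W$ forces $U(s)\ge c(s-r)$ near $r$, whence $|U_1^{p-1}-U_2^{p-1}|\le (p-1)\bigl(c(s-r)\bigr)^{p-2}|U_1-U_2|$ with a weight that is integrable near $s=r$ since $p>1$, and a Gronwall argument on the integral system for $(U,W)$ closes the estimate. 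With that sentence supplied your proof is complete; alternatively you could sidestep the issue entirely by replacing the IVP uniqueness with the annular Dirichlet uniqueness, as the paper does.
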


\begin{proof}
For $R>r$, let $u_R$ be the unique solution of the Dirichlet
problem
$$
\begin{cases}
\Delta_p u_R = |u_R|^{p-2}u_R& \text{in } B_R\setminus\overline{B_r}, \\
u(R) = 1, & \\
u(r)=0.
\end{cases}
$$

Then, by uniqueness, $u_R$ is a nonnegative and radial function.
Moreover, by the regularity theory and maximum principle we have
$\frac{\partial u_R}{\partial \nu}(r)\neq 0$ (see \cite{L,V}).
Thus, for any $R>r$, we define the restriction of $u_0$ by
$$
u_0= \dfrac{u_R}{\dfrac{\partial u_R}{\partial \nu}(r)}.
$$
By uniqueness of the Dirichlet problem, it is easy to check that
$u_0$ is well defined and is a nonnegative radial solution of
\eqref{a1}. Furthermore, by the simplicity of $S_p(r)$, $u_0$ is
the eigenfunction associated to $S_p(r)$ for every $R>r$.
\end{proof}

\begin{pr}\label{gago}
Let $v$ be a radial solution of \eqref{roman}. Then $v$ is a
multiple of $u_0$. In particular any radial minimizer of
\eqref{sqa} is a multiple of $u_0.$
\end{pr}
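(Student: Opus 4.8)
The plan is to reduce everything to a uniqueness statement for the radial ODE associated with the interior equation in \eqref{roman}, exploiting the fact that that equation is homogeneous of degree $p-1$. Writing $v=v(s)$ with $s=|x|$, on the annulus $r<s<R$ the equation $-\Delta_p v+|v|^{p-2}v=0$ becomes
$$
\bigl(s^{N-1}|v'|^{p-2}v'\bigr)' = s^{N-1}|v|^{p-2}v, \qquad r<s<R,
$$
together with $v(r)=0$ coming from the condition $u=0$ on $\partial B_r$; the boundary condition on $\partial B_R$ only serves to fix $\lambda$ and will not be needed. Since $\alpha v$ solves the same equation for every $\alpha\in\R$, it suffices to prove that, up to this scaling, there is a unique radial solution vanishing at $s=r$, and then to identify it with $u_0$ by means of Proposition \ref{u0}.

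The next step is to put the ODE in first–order form. Setting $\phi(s)=s^{N-1}|v'(s)|^{p-2}v'(s)$ one obtains
$$
v'(s)=\operatorname{sign}\bigl(\phi(s)\bigr)\,\bigl|s^{1-N}\phi(s)\bigr|^{1/(p-1)},\qquad \phi'(s)=s^{N-1}|v(s)|^{p-2}v(s),
$$
with $v(r)=0$ and $\phi(r)=r^{N-1}|v'(r)|^{p-2}v'(r)$. Two observations then make uniqueness elementary. First, if $v'(r)=0$ then $\phi(r)=0$, and a direct estimate on $[r,r+\delta]$ (using $|\phi(s)|\lesssim\int_r^s|v|^{p-1}$ and $|v(s)|\lesssim\int_r^s|s^{1-N}\phi|^{1/(p-1)}$, which combine to give $\max_{[r,r+\delta]}|v|\le C\delta^{p/(p-1)}\max_{[r,r+\delta]}|v|$) forces $v\equiv0$; hence a nontrivial radial solution has $c:=v'(r)\neq0$. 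Second, when $c\neq0$ the function $v$ keeps the sign of $c$ on all of $(r,R)$: for instance if $c>0$ then $\phi(r)>0$, and $\phi'\ge0$ as long as $v\ge0$, so $\phi>0$, $v'>0$ and $v$ is increasing, staying positive. Consequently $v'\neq0$ on $(r,R)$, so on compact subsets of $(r,R)$ the right–hand side of the system is locally Lipschitz in $(v,\phi)$, while near $s=r$ one has $\phi$ bounded away from $0$ and the only non‑Lipschitz term $|v|^{p-2}v$ is controlled by the same Gronwall estimate as above. Therefore the solution with $v(r)=0$, $v'(r)=c$ is unique, and by homogeneity it equals $(c/c_0)$ times the solution with $v'(r)=c_0$.

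Finally I would invoke Proposition \ref{u0}: the restriction $u_0|_{B_R}$ is a nontrivial (positive) radial solution of $-\Delta_p u+|u|^{p-2}u=0$ in $B_R\setminus\overline{B_r}$ vanishing on $\partial B_r$, and by the boundary–point lemma already used there ($u_0'(r)\neq0$, cf. \cite{V}). Then $v$ and $\tfrac{v'(r)}{u_0'(r)}\,u_0$ are two radial solutions of the same equation with the same value and the same first derivative at $s=r$, so the uniqueness just proved gives $v=\tfrac{v'(r)}{u_0'(r)}\,u_0$, i.e. $v$ is a multiple of $u_0$. For the last assertion, recall from the Introduction that a minimizer of \eqref{sqa}, suitably normalized, is a weak solution of \eqref{roman}; if it is radial it is then a radial solution of \eqref{roman}, and the first part applies.

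The main obstacle is precisely the ODE uniqueness at the endpoint $s=r$, where $v$ vanishes and the $p$-Laplacian degenerates, compounded for $1<p<2$ by the non‑Lipschitz dependence of $\phi'$ on $v$ through $|v|^{p-2}v$; both are dealt with by the integral/Gronwall argument sketched above, or, alternatively, by quoting the known uniqueness theory for radial solutions of $p$-Laplacian problems. A minor point is the need for $u_0'(r)\neq0$, which is the Hopf-type estimate already invoked in the proof of Proposition \ref{u0}.
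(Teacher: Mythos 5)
Your argument is correct, but it takes a genuinely different route from the paper. The paper's proof is a two-line variational/PDE argument: since $v$ is radial it is constant on $\partial B_R$, so one picks $a$ with $v=au_0$ on $\partial B_R$ and observes that $v$ and $au_0$ solve the same Dirichlet problem $\Delta_p w=|w|^{p-2}w$ in $B_R\setminus\overline{B_r}$ with identical boundary data on both components of the boundary; uniqueness for this Dirichlet problem (strict convexity of $w\mapsto\int|\nabla w|^p+|w|^p$) gives $v=au_0$. You instead match the data at the \emph{inner} boundary and prove uniqueness for the radial Cauchy problem at $s=r$, which is where all the work lies: you must rule out $v'(r)=0$, show the solution does not change sign so that $\phi$ stays away from $0$, and handle the non-Lipschitz term $|v|^{p-2}v$ for $1<p<2$. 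On that last point your phrase ``the same Gronwall estimate as above'' is slightly loose: for two solutions with the same nonzero $v'(r)=c$ the estimate you actually need is $\bigl||v_1|^{p-2}v_1-|v_2|^{p-2}v_2\bigr|\le C(s-r)^{p-2}|v_1-v_2|$, which uses the lower bound $v_i(s)\ge\tfrac{|c|}{2}(s-r)$ and the integrability of $(s-r)^{p-2}$ near $s=r$; with that modification the contraction closes and your proof is complete. The trade-off is clear: the paper's approach avoids the degenerate-ODE analysis entirely by exploiting Dirichlet uniqueness (and needs only that $v$ is constant on $\partial B_R$), while yours is self-contained at the ODE level and yields the sharper statement that a radial solution is determined by $v'(r)$ alone, independently of any outer boundary condition. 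Both correctly reduce the final claim about radial minimizers to the fact, stated in the introduction, that normalized extremals of \eqref{sqa} are weak solutions of \eqref{roman}.
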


\begin{proof}\label{importante}
Let $a>0$ be such that $v=au_0$ on $\partial B(0,R)$. Then $v$ and
$au_0$ are two solutions to the Dirichlet problem $\Delta_p w =
w^{p-1}$ and $w=v$ on
$\partial\left(B_R\setminus\overline{B_r}\right)$. Hence, by
uniqueness, we have that $v=au_0$ in $B_R.$
\end{proof}

\begin{ob}
If $1<q<p$ then the solution of \eqref{roman}, by Remark
\ref{radial} and Proposition \ref{importante}, is a multiple of
$u_0.$
\end{ob}

Now we can deal with the proof of Theorem \ref{morel}.

\begin{proof}[\textbf{Proof of Theorem \ref{morel}}]
Let $R>r$ be fixed and consider $u_0$ to be the nonnegative radial
function given by Proposition \ref{u0} such that that $u_0 =1$ on
$\partial B_R$. Then, by Proposition \ref{gago}, it is enough to
prove that $u_0$  is not a minimizer for $S_q(r)$ when $q>Q(R).$

First let us move this symmetric configuration in the $x_1$
direction. For any $t\in\mathbb{R}$ and $x\in\rn$ we denote
$x_t=(x_1-t,x_2,\dots,x_N)$ and define
$$
U(t)(x)=u_0(x_t)
$$
Observe that $U$ vanishes in $A_t:=B_r(te_1)$ (the ball with
center $te_1$ and radius $r$) a subset of $B_R$ of the same
measure of $B_r$ for all $t$ small.

Consider the function
$$
h(t)=\dfrac{f(t)}{g(t)}
$$
where
$$
f(t)=\int_{B_R}|\nabla U|^p + U^p \, \rd x \quad\textrm{and}\quad
g(t)=\left(\int_{\partial B_R} U^q \rd S\right)^{\frac{p}{q}}.
$$
We observe that $h(0)=0$ and since $h$ is an even function, we
have $h'(0)=0.$ Now,
$$
h''(0)=\dfrac{f''g^2-fgg''-2f'gg'-2fgg'}{g^3}\bigg|_{t=0}.
$$
Next we compute these terms. First, since $u_0$ is the first
eigenfunction of \eqref{roman} with $q=p$ and $u_0=1$ on $\partial
B_R$ we get
$$
f(0)=S_p(r)|\partial B_R| \quad \textrm{and} \quad g(0)=|\partial B_R|^{\frac{p}{q}}.
$$
Thus, by Gauss--Green's Theorem and using the fact that $u_0$ is
radial, we get
$$
f'(0) = -\int_{B_R}\dfrac{\partial }{\partial x_1}\left(|\nabla
u_0|^p + u_0^p\right) \rd x = \int_{\partial B_R}(|\nabla u_0|^p +
u_0^p)\nu_1\rd S= 0.
$$
Again, since $u_0$ is radial,
$$
g'(0) = \dfrac{p}{q}\left(\int_{\partial B_R}u^q \rd S
\right)^{\frac{p}{q}-1}\left(\int_{\partial B_R}\dfrac{\partial
u^q}{\partial x_1}\rd S\right)=0.
$$
Finally, using that $u_0=1$ on $\partial B_R$, we obtain
$$
g''(0) = p|\partial B_R|^{\frac{p}{q}-1}\int_{\partial
B_R}(q-1)\left(\dfrac{\partial u_0}{\partial
x_1}\right)^2+\dfrac{\partial^2 u_0}{\partial x_1^2} \,\rd S
$$
and, by the Gauss--Green's Theorem
\begin{align*}
f''(0) &= p\int_{B_R}\dfrac{\partial}{\partial x_1}
\left(\dfrac{1}{2}|\nabla u_0|^{p-2}\dfrac{\partial |\nabla u_0|^2}{\partial x_1} +
\dfrac{1}{p}\dfrac{\partial u_0^p}{\partial x_1}\right)\,\rd x\\
& =p \int_{\partial B_R}\left(\dfrac{1}{2}|\nabla
u_0|^{p-2}\dfrac{\partial |\nabla u_0|^2}{\partial x_1} +
\dfrac{1}{p}\dfrac{\partial u_0^p}{\partial x_1}\right)\nu_1 \,
\rd S.
\end{align*}
Then
\begin{align*}
h''(0) &= \dfrac{p}{|\partial B_R(0)|^{p/q}}
\Bigg[ \int_{\partial B_R}\left(\dfrac{1}{2}|\nabla u_0|^{p-2}\dfrac{\partial |\nabla u_0|^2}{\partial x_1} +
\dfrac{1}{p}\dfrac{\partial u_0^p}{\partial x_1}\right)\nu_1 \, \rd S\\
& \quad - S_p(r) \int_{\partial B_R}(q-1)\left(\dfrac{\partial
u_0}{\partial x_1}\right)^2+\dfrac{\partial^2 u_0}{\partial x_1^2}
\,\rd S \Bigg].
\end{align*}
Thus, since $u_0$ is radial, we get
\begin{align*}
h''(0) &= \dfrac{p}{N|\partial B_R(0)|^{p/q}}\Bigg[
\int_{\partial B_R}\left(\dfrac{1}{2}|\nabla u_0|^{p-2}\dfrac{\partial |\nabla u_0|^2}{\partial \nu} +
\dfrac{1}{p}\dfrac{\partial u_0^p}{\partial \nu}\right) \, \rd S\\
& \quad- S_p(r)\int_{\partial B_R}(q-1)|\nabla u_0|^2+ \Delta u_0
\,\rd S \Bigg].
\end{align*}
Now, by definition, $u_0(x)=u_0(|x|)$ and $\alpha$ satisfies
$$
(s^{N-1}|u_0'|^{p-1}u_0')' = s^{N-1}u_0^{p-1} \quad \forall s>r
$$
with $u_0(R)=0$ and $u_0(r)=0,$ moreover, by Proposition \ref{u0}, we have
$$
u_0'(s)^{p-1}=S_p(r)u_0(s)^{p-1} \quad \forall s>r.
$$
Then
$$
\dfrac{1}{2}|\nabla u_0|^{p-2}\dfrac{\partial |\nabla
u_0|^2}{\partial \nu} + \dfrac{1}{p}\dfrac{\partial
u_0^p}{\partial \nu}= \dfrac{S_p(r)^{\frac{1}{p-1}}}{p-1}
\left(1-\dfrac{N-1}{R}S_p(r)\right) + S_p(r)^{\frac{1}{p-1}}
$$
and
\begin{align*}
S_p(r)\left[ (q-1)|\nabla u_0|^2+ \Delta u_0\right] &=
(q-1)S_p(r)^{\frac{p+1}{p-1}} + \dfrac{S_p(r)^\frac{1}{p-1}}{p-1}
\left(1-\dfrac{N-1}{R}S_p(r)\right)\\
& + \dfrac{N-1}{R}S_p(r)^{\frac{p}{p-1}}.
\end{align*}
Therefore
$$
h''(0) = \dfrac{p S_p^{\frac{1}{p-1}}}{N |\partial
B_R|^{\frac{p}{q}-1}}\left[1-(q-1)S_p(r)^{\frac{p}{p-1}}-\dfrac{N-1}{R}S_p(r)
\right].
$$

Thus, if $q>Q(R)$ we get that $h''(0)<0$ and so 0 is a strict
local maxima of $\psi$. So we have proved that
$$
S_q(r)=h(0)>h(t)\ge S_q(B_r(te_1))
$$
for all t small. Therefore a symmetric configuration is not optimal.
\end{proof}

To finish the paper we prove Proposition \ref{caranta}.

\begin{proof}[\textbf{Proof of Proposition \ref{caranta}}]
We proceed in two step.

\noindent\textbf{Step 1.} First we show that, for $R>r,$ $S_p(R,r)=S_p(r)$ verifies the differential equation
\begin{equation}\label{deriv}
\dfrac{\partial S_p}{\partial R} = -\dfrac{N-1}{R}S_p + 1 -
(p-1)S_p^{\frac{p}{p-1}}
\end{equation}
with the condition
$$
S_p|_{R=r}=+\infty.
$$

Again we consider $u_0(x)=u_0(|x|)$ the nonnegative radial
function given by Proposition \ref{u0}. Thus, for all $R>r,$ we
get
$$
\begin{cases}
(p-1)\left(u_0 '\right)^{p-2}u_0''+\dfrac{N-1}{R}(u_0')^{p-1}= u_0^{p-1}, &\\
u_0'(R)^{p-1}=S_pu_0(R)^{p-1}, & \\
u_0(r)=0.
\end{cases}
$$
Then
$$
S_p=\left( \dfrac{u_0'(R)}{u_0(R)}\right)^{p-1}.
$$
Thus
\begin{align*}
\frac{\partial S_p}{\partial R} &= (p-1)\left( \dfrac{u_0'(R)}{u_0(R)}\right)^{p-2}
\dfrac{u_0''(R)u_0(R)-u_0'(R)^2}{u_0(R)^2}\\
&=(p-1)\left( \dfrac{u_0'(R)}{u_0(R)}\right)^{p-2}\dfrac{u_0''(R)}{u_0(R)}-(p-1)S_p^{\frac{p}{p-1}}\\
&=(p-1)\dfrac{u_0'(R)^{p-2}u_0''(R)}{u_0(R)^{p-1}}-(p-1)S_p^{\frac{p}{p-1}}\\
&=1-\frac{N-1}{R}S_p-(p-1)S_p^{\frac{p}{p-1}}.
\end{align*}

On the other hand, since (by definition) $\frac{\partial
u_0}{\partial\nu}\equiv1$ on $\partial B_r$, we get that
$u'(r)=1$. Then
$$
\lim_{R\to r}S_p = \lim_{R\to r}\left(
\dfrac{u_0'(R)}{u_0(R)}\right)^{p-1} = +\infty.
$$

Now, it is easy to check that $\lim_{R\to r}Q(R)=1^-$.

\noindent\textbf{Step 2.} Finally, we prove that
$$
\lim_{R\to+\infty}Q(R)=p.
$$

We begin  differentiating \eqref{deriv} to obtain
$$
\dfrac{\partial^2 S_p}{\partial R^2} = \dfrac{N-1}{R^2}S_p
-\dfrac{N-1}{R}\dfrac{\partial S_p}{\partial
R}-pS_p^{\frac{1}{p-1}}\dfrac{\partial S_p}{\partial R}.
$$
Then, since $S_p>0$, at any critical point ($S_p'=0$) we have that
$S_p''>0.$ Thus, $S_p$ has at most one critical point, which is a
minimum. If $S_p$ has a minimum, then there exist $R_0>r$ such
that $S_p'(R_0)=0$. Moreover, since  $S_p'(R)\neq0$ for any $R\neq
R_0$ and $S_p\to+\infty$ as $R\to r$ and by \eqref{deriv}, we get
that $S_p'<0$ for all $r<R<R_0$ and $S_p'>0$ for all $R>R_0$.
Thus, using again \eqref{deriv} we have that
$S_p^{\frac{p}{p-1}}<\frac{1}{p-1}$ for all $R>R_0.$ Then $S_p$ is
strictly increasing as a function of $R$ and bonded for all
$R>R_0.$ Consequently $S_p'\to 0$ as $R\to+\infty.$ It follows, by
\eqref{deriv}, that $S_p^{\frac{p}{p-1}}\to\frac{1}{p-1}$ as
$R\to+\infty.$ On the other hand using \eqref{Q} and \eqref{deriv}
we see that
\begin{equation}
S_p=(Q(R)-p)S_p^{\frac{p}{p-1}}.
\end{equation}
So, if $S_p$ has a minimum, we get that $Q(R)>p$ for all $R>R_0$
and $Q(R)\to p^{+}$ as $R\to+\infty.$ Now, If $S_p$ has not
critical points so $S_p'\neq0$  for all $R>r$ and using that
$S_p\to+\infty$ as $R\to r$ and \eqref{deriv} we get that $S_p'<0$
for all $R>r.$ Consequently, in this case, $S_p$ is strictly
decreasing and therefore $S_p'\to 0$ as $R\to +\infty$ and by
\eqref{deriv} we have that $S_p\to\frac{1}{p-1}$ as $R\to+\infty$.
Then, if $S_p$  has not critical points, we get $Q(R)<p$ and
$Q(R)\to p^{-}$ as $R\to+\infty$.
\end{proof}

\textbf{Acknowledgements}
I want to thank J. Fern\'andez Bender
for his throughout reading of the manuscript that help us to
improve the presentation of paper.

\bibliographystyle{amsplain}

\end{document}